\documentclass[letterpaper,10pt]{article}

\usepackage[affil-it]{authblk}
\usepackage[margin=1.5in]{geometry}


\usepackage[toc,page]{appendix}
\usepackage{amssymb,amsmath,latexsym,amsthm,amsfonts}
\usepackage{cite} 

\usepackage[pdftex,pagebackref=true,colorlinks=true,linkcolor=blue,unicode]{hyperref} 

\usepackage[normalem]{ulem} 

\usepackage{enumerate}

\newcommand{\I}[1]{ {\bf 1}_{\left\{ #1 \right\}} }
\newcommand{\R}{\mathbb{R}}
\newcommand{\N}{\mathbb{N}}
\newcommand{\EE}{\mathbb{E}}
\newcommand{\PP}{ \mathbb{P}}
\newcommand{\QQ}{\mathbb{Q}}

\newcommand{\dd}{\mathrm{d}}

\newcommand{\qqq}{q}
\newcommand{\ppp}{p}
\newcommand{\eo}{\epsilon_1}
\newcommand{\et}{\epsilon_2}

\newtheorem{theorem}{Theorem}
\newtheorem{lemma}{Lemma}
\newtheorem{proposition}{Proposition}
\newtheorem{corollary}{Corollary}

\newtheorem{definition}{Definition}

\newcommand{\guidonote}[1]{#1}

\newcommand{\bast}{{b,\infty}}
\newcommand{\bc}{{b,c}}

\title{A Dichotomy for Sampling Barrier-Crossing Events of Random Walks with Regularly Varying Tails}
\author{A.~B.~Dieker\thanks{Postal address: Department of Industrial Engineering and Operations Research, Columbia University, New York, NY, 10027, USA}}
\affil{Columbia University}
\author{Guido R.~Lagos\thanks{Postal address: Faculty of Engineering and Sciences, Universidad Adolfo Ib\'a\~nez, Pe\~nalol\'en, Santiago, Chile}}
\affil{Universidad Adolfo Ib\'a\~nez}
\date{\today}

\begin{document}

\maketitle

\begin{abstract}
\guidonote{We study how to sample} paths of a random walk up to the first time it crosses a fixed barrier, in the setting where the step sizes are iid with negative mean and have a regularly varying right tail. 
\guidonote{We introduce a desirable property for a change of measure to be suitable for exact simulation.
We study whether the change of measure of Blanchet~and Glynn~\cite{blanchet2008efficient} satisfies this property and show that it does so}
if and only if the tail index $\alpha$ of the right tail lies in the interval $(1, \, 3/2)$.
\\
\\
\emph{Keywords:} conditional sampling; perfect sampling; efficiency; random walks; heavy tails; regularly varying tails; change of measure; likelihood ratio\\
2010 Mathematics Subject Classification: Primary 68U20; 60G50; 68W40; Secondary 90B99
\end{abstract}

\section{Introduction}\label{sec:introduction}

Barrier-crossing events of random walks appear in numerous engineering and science models.
Examples range from stationary waiting times in queues to ruin events in insurance risk processes~\cite{asmussen1996large,resnick1997heavy,embrechts1997modelling}.
Random walks with regularly varying step size distributions are of particular interest, and their special analytic structure facilitates an increasingly complete understanding of associated rare events.

This paper considers the problem of sampling a path of a random walk until it crosses a given fixed barrier in the setting of heavy-tailed step sizes with negative mean.
The higher the barrier, the lower the likelihood of reaching it.
This poses challenges for conditional sampling, since naive Monte Carlo sampling devotes much computational time to paths that never cross the barrier and must therefore be ultimately discarded.

The ability to sample up to the first barrier-crossing time plays a central role in several related problems, such as for sampling paths up to their maximum~\cite{blanchet2011exact} or for sampling only the maximum itself~\cite{ensor2000simulating}.
In turn these have applications to perfect sampling from stationary distributions~\cite{blanchet2012steady,blanchet2014exact} and to approximately solving stochastic differential equations~\cite{dong2014strong}.



\paragraph{Main contributions.}
The central question in this paper is: Can the change of measure proposed in Blanchet and Glynn~\cite{blanchet2008efficient} be used for \guidonote{\emph{exact}, i.e.~unbiased,} conditional sampling of heavy-tailed random walks given a barrier-crossing rare event?
The Blanchet-Glynn measure is designed to approximate such a conditional distribution, so
one might therefore expect an answer to this question in the affirmative.
Surprisingly, we answer this question by and large in the negative: this measure cannot be used for conditional sampling.
Our results are a consequence of a delicate second-order analysis of tail probabilities of a sum of heavy tailed random variables \guidonote{that are related to the \emph{residual life tail distribution} of the random walk increments. The asymptotic decomposition and analysis we put forth may be of interest in itself}.
 
\guidonote{We introduce a `desirable' property for a candidate change of measure to be suitable for our exact sampling problem and reveal an intriguing dichotomy on the suitability of} the Blanchet-Glynn measure: this measure \guidonote{satisfies this property} if and only if the tail index is below the threshold $3/2$.
It is worthwhile to stress two immediate consequences.
First, our result roughly implies that only the heaviest tails \guidonote{stand a chance to} be efficient in this setting\guidonote{, since the desirable property we introduce is intuitively a proxy for efficiency of the proposal measure}.
This is counterintuitive, since heavier tails typically make problems harder.
Second, the threshold is not directly connected to the existence of integer moments for the step size distribution.

The threshold $3/2$ also arises in the simulation literature involving barrier-crossing events with regularly varying step sizes~\cite{blanchet2012efficient,murthy2014state}.
The nature of the threshold we obtain here is however different from these works for three reasons.
First, these papers focus on estimating the rare event probability of exceeding a barrier; in contrast, our work focuses on sampling barrier-crossing paths. 
Second, these papers obtain that the heaviest tails are inefficient in their framework, while we obtain \guidonote{a form of inefficiency for lighter tails}.
Third, and perhaps most importantly, the threshold $3/2$ in the existing literature is a direct consequence of requiring second moment conditions of the estimator, while a direct relation with moments is absent for conditional sampling problems.

A by-product of our work is a counterexample for the statement of Proposition~4 of Blanchet and Glynn~\cite{blanchet2008efficient}.
This proposition states that, for a broad class of heavy-tailed step sizes, the expected hitting time of the barrier grows linearly in the barrier level under the Blanchet-Glynn change of measure.
We show, though, that this result does not always hold.
This proposition is not central to the framework introduced in~\cite{blanchet2008efficient}, and the issue we expose here can also be deduced from Corollary~1 in~\cite{blanchet2012efficient}, but our result reopens the question of when the measure of Blanchet and Glynn induces a linear hitting time expectation.


\paragraph{Related literature.}
The primary means for \guidonote{exact or unbiased} sampling from heavy-tailed random walks is based on the change of measure technique.
Simply put, this procedure consists in sampling from a distribution different from the desired one and determining (or computing) the output using the likelihood ratio.
The essential idea is that the changed \guidonote{or proposal} distribution should emphasize characteristics of barrier-crossing paths.

The literature of \guidonote{\emph{exact simulation} of barrier-crossing paths} is closely related to the one of \emph{estimating the probability} of exceeding the barrier.
In the heavy-tailed setting, the latter problem has already been studied for two decades.
In contrast, the \guidonote{exact} path-sampling problem has only recently received attention, mostly driven by applications of \emph{Dominated Coupling From the Past} when in presence of heavy tails; see~\cite{blanchet2014exact}.

For the probability estimating problem under heavy tails, early approaches are~\cite{asmussen1997simulation,asmussen2000rare,juneja2002simulating}.
An important contribution for the current paper is~\cite{blanchet2008efficient}, which was later followed by~\cite{blanchet2012efficient,murthy2014state}.
A recent new technique is~\cite{gudmundsson2014}, which uses \emph{Markov Chain Monte Carlo} to estimate the multiplicative inverse of the probability of crossing the barrier.

The \guidonote{problem of exact sampling of paths} with heavy tails, on the other hand, has only recently been tackled by~\cite{blanchet2014exact}.
The latter modifies the measure of~\cite{murthy2014state}, which focuses on the probability estimation problem, and builds on the scheme for \emph{exact sampling} of paths introduced in~\cite[\S 4]{blanchet2011exact}.
The approach studied in this paper is based on the Blanchet-Glynn change of measure, which is conceptually simpler than the approach proposed in~\cite{blanchet2014exact}.
The search for a simpler algorithm provided the motivation for this paper.


\paragraph{Outline.}
This paper is organized as follows.
In Section~\ref{sec:preliminaries} we discuss the general preliminaries for our conditional sampling problem: a change of measure technique and \guidonote{the criterion we propose as a desirable property for efficiency for exact conditional sampling}.
In Section~\ref{sec:main} we state our main result of efficiency for conditional sampling when using the Blanchet-Glynn change of measure~\cite{blanchet2008efficient} and with regularly varying step sizes.
In Section~\ref{sec:comparison} we compare our threshold result of Section~\ref{sec:main} with similar ones in the literature of rare event sampling.
In Section~\ref{sec:proof} we give a proof of the main result of Section~\ref{sec:comparison}.

\paragraph{Notation.}
We denote by $\{S_n\}$ the infinite length paths of the random walk.
Given a probability measure $\QQ$ over $\{S_n\}$, we denote the expectation with respect to measure $\QQ$ as $\EE^\QQ$.
We write $\EE^\QQ_y [\cdot] := \EE^\QQ [ \cdot | S_0=y ]$ and omit $y$ when $y=0$, as customary in the literature.
Given two probability measures $\mathsf{P}$ and $\mathsf{Q}$ over the same space, we denote \emph{absolute continuity} of $\mathsf{P}$ with respect to $\mathsf{Q}$ as $\mathsf{P} \ll \mathsf{Q}$, meaning that for all measurable $B$ $\mathsf{Q}(B)=0$ implies $\mathsf{P}(B)=0$.
For $x, y$ real we denote $x^+:=\max\{x, 0\}$, $x^-:=-\min\{x, 0\}$, $x \wedge y := \min\{x,y\}$ and $x \vee y := \max\{ x, y \}$.
Also, for two functions $f$ and $g$ we write $f(t) \sim g(t)$ when $\lim_{t \to \infty} f(t)/g(t) = 1$; we write $f(t) = O\left( g(t) \right)$ when $\limsup_{t \to \infty} |f(t)/g(t)| < \infty$, and $f(t) = o\left( g(t) \right)$ when $\lim_{t \to \infty} |f(t)/g(t)| =0$.

\section{Preliminaries}\label{sec:preliminaries}

This section gives the background necessary for the exposition of our main result.
In Section~2.1, we describe techniques for \guidonote{\emph{exact}, or \emph{unbiased},} conditional sampling using change of measure technique.
In Section~2.2, we give 
 \guidonote{the criterion we propose as a desirable property for efficiency} for this problem.
In Section~2.3, we briefly introduce the Blanchet-Glynn~\cite{blanchet2008efficient} change of measure.

\paragraph{General setting.}
We consider a random walk $S_n := \sum_{i=1}^n X_i$, where $X_i$ are iid, $\EE |X_i| < \infty$ and $S_0 = 0$ unless explicitly stated otherwise.
We assume that $\{S_n\}$ has \emph{negative drift}, meaning that $\EE X_i < 0$.
We also assume that $X_i$ has unbounded right support; that is, $\PP(X_i >t)>0$ for all $t\in \R$.

Given a \emph{barrier} $b \geq 0$, let $\tau_b := \inf \{ n \geq 0 : S_n >b \}$ be the first barrier-crossing time.
Since the random walk has negative drift, we have $S_n \to -\infty$ a.s.~as $n \to \infty$, and also $\PP (\tau_b = \infty) > 0$.

Our main goal is to study the \guidonote{suitability, with efficiency in mind, of using a change of measure to sample exactly paths} $(S_1, \ldots, S_{\tau_b})$ conditional on $\{ \tau_b < \infty \}$.

We remark that for the sake of clarity of exposition we will abuse notation and write that `$(S_0, \ldots, S_{\tau_b})$ follows the distribution $\PP(\, \cdot \, | \tau_b < \infty)$' to mean that for all finite $n \in \N$ the random vector $(S_0, \ldots, S_{\tau_b})$ with $\tau_b = n$ has the distribution $\PP(\, \cdot \, | \tau_b = n)$.

\subsection{\guidonote{Exact c}onditional sampling via change of measure}\label{sec:AR}

We tackle the problem of \guidonote{\emph{exact} or \emph{unbiased}} conditional sampling using the Acceptance-Rejection algorithm, which uses the change of measure technique.
Here we give a brief exposition of these two methods.

\paragraph{Change of measure technique.}
Let $\mathsf{P}(y, \ \dd z)$ be the \emph{transition kernel} of the random walk, i.e., $\mathsf{P}(y, \ \dd z) = \PP (S_{1} \in \ \dd z | S_0=y)$.
We consider a ``changed'' \guidonote{or ``proposal''} transition kernel $\mathsf{Q}(y, \ \dd z)$, which may be chosen \emph{state dependent}, meaning that $\mathsf{Q}(y_1,y_1+\cdot)$ and $\mathsf{Q}(y_2,y_2+\cdot)$ may be different measures for $y_1 \neq y_2$.
We assume that $\mathsf{P}(y, \cdot) \ll \mathsf{Q}(y, \cdot)$ for all $y$, which implies that the \emph{likelihood ratio} function $\dd \mathsf{P} / \dd \mathsf{Q} (y,\cdot)$ exists.
Letting $\QQ$ be the distribution of $\{S_n\}$ induced by the proposal kernel $\mathsf{Q}$, we slightly abuse notation and denote by $\dd \PP/\dd \QQ (S_n : 0 \leq n \leq T)$ the likelihood ratio of a finite path $(S_0 , \ldots, S_T)$.
More precisely, for $T$ finite $\dd \PP/\dd \QQ (S_n : 0 \leq n \leq T) := L_T$ where $L_T$ is the nonnegative random variable satisfying $\EE^\QQ \left[ {\bf 1}_B L_T \right] = \EE^\PP \left[ {\bf 1}_B \right]$ for all $B$ in the $\sigma$-algebra $\sigma (S_n : 0 \leq n \leq T)$.
With this, it holds that
\begin{eqnarray*}
\frac{\dd \PP}{\dd \QQ} (S_n : 0 \leq n \leq T) = \frac{\dd \mathsf{P}}{\dd \mathsf{Q}}(S_0,S_1) \cdots \frac{\dd \mathsf{P}}{\dd \mathsf{Q}}(S_{T-1},S_T),
\end{eqnarray*}
for all $T$ finite or $\QQ$-a.s.~finite stopping time.
See~\cite[\S XIII.3]{asmussen2003applied} for further details.

\paragraph{Acceptance-Rejection algorithm for \guidonote{exact} conditional sampling.}
This procedure considers the situation of a distribution that is ``difficult'' to sample from, and another distribution that is ``easy'' to sample from; the aim is to simulate from the difficult distribution.
The Acceptance-Rejection algorithm allows one to sample from the difficult distribution by repeatedly sampling from the easy, ``proposal'', distribution.
Here we show a known specialization of this technique to the problem of sampling paths from the conditional distribution $\PP \left(\, \cdot \, | \tau_b < \infty \right)$, see~\cite{blanchet2011exact}.

Let $\mathsf{P}$ be the transition kernel of the random walk, and consider a ``proposal'' kernel $\mathsf{Q}$, possibly state dependent, such that $\mathsf{P}(y, \cdot) \ll \mathsf{Q}(y, \cdot)$ for all $y$.
Assume that for some computable constant $C > 0$ we have
\begin{eqnarray*}
\frac{\dd \PP}{\dd \QQ} (S_n : 0 \leq n \leq \tau_b) \cdot \I{\tau_b < \infty} \leq C \qquad \QQ\text{-a.s.}
\end{eqnarray*}
If $U$ is uniformly distributed on $[0,1]$ under $\QQ$ and drawn independently from $\{S_n\}$, then it can be verified that
\begin{eqnarray}
\label{eq:AR 2}
\QQ \left( U \leq \frac{\I{\tau_b < \infty}}{C} \frac{\dd \PP}{\dd \QQ} (S_n \!:\! 0 \! \leq \! n \! \leq \! \tau_b) \right) \!&\!=\!&\! \frac{\PP(\tau_b < \infty)}{C} , \\
\label{eq:AR 1}
\QQ \left( \{S_n\} \in \cdot \, \left\lvert \ U \leq \frac{\I{\tau_b < \infty}}{C} \frac{\dd \PP}{\dd \QQ} (S_n \!:\! 0 \! \leq \! n \! \leq \! \tau_b) \right. \right) \!&\!=\!&\! \PP \left( \{S_n\} \in \cdot \, | \tau_b < \infty \right) ,
\end{eqnarray}
over events $B \in \mathcal{F}_{\tau_b}$ such that $B \subseteq \{ \tau_b<\infty \}$.

The Acceptance-Rejection procedure consists on iterating the steps: (i) sample jointly $\left(U, (S_0, \ldots, S_{\tau_b}) \right)$ from $\QQ$, and (ii) check whether 
\begin{eqnarray}\label{eq:acc event}
U & \leq & \frac{\I{\tau_b < \infty}}{C} \frac{\dd \PP}{\dd \QQ} (S_n \!:\! 0 \! \leq \! n \! \leq \! \tau_b)
\end{eqnarray}
holds.
The algorithm stops, ``accepts'', the first time inequality~\eqref{eq:acc event} is satisfied, and outputs the path $(S_0, \ldots, S_{\tau_b})$.
Equation~\eqref{eq:AR 2} states that a sample is ``accepted'' with probability $\PP(\tau_b < \infty) / C$; and equation~\eqref{eq:AR 1} assures that \guidonote{\emph{the simulation is exact}, i.e., that} the distribution of the output is $\PP(\, \cdot \, | \tau_b < \infty)$.

\subsection{\guidonote{A desirable property} for conditional sampling}

We now \guidonote{propose a criterion} for when \guidonote{a ``changed'' or ``proposal''} transition kernel $\mathsf{Q}$ is useful in sampling paths up to $\tau_b$ from the conditional distribution $\PP(\, \cdot \, | \tau_b < \infty)$.
Simply put, \guidonote{our proposed criterion} states that \guidonote{all} crossing events occur with higher probability under the \guidonote{proposal} measure than under the original.
\guidonote{Intuitively thus, this criterion is an efficiency condition for the exact conditional sampling problem.}

\begin{definition}[\guidonote{Direct proposal for exact} conditional sampling]\label{def:cond eff}
Let $\mathsf{Q}(y, \, \dd z)$ be a transition kernel such that $\mathsf{P} (y, \cdot) \ll \mathsf{Q} (y, \cdot)$ for all $y$.
Let $\QQ$ be the distribution of $\{S_n\}$ on $\R^\N$ induced by $\mathsf{Q}$.
We say that $\QQ$ is \emph{\guidonote{a direct proposal for exact} conditional sampling from $\PP (\, \cdot \, | \tau_b < \infty)$} iff
\begin{eqnarray*}
\QQ \left( \{S_n\} \in B \right) \geq \PP \left( \{S_n\} \in B \right) ,
\end{eqnarray*}
for all events $B \in \mathcal{F}_{\tau_b}$ such that $B \subseteq \{ \tau_b<\infty \}$, \guidonote{and the inequality is strict for some such $B$. Here} $\mathcal{F}_{\tau_b}$ is the usual $\sigma$-algebra associated to the stopping time $\tau_b$.
\end{definition}

We remark that the previous notion does not require $\QQ( \tau_b<\infty )=1$, although that is true for the Blanchet-Glynn change of measure, as we will see in Proposition~\ref{prop:infinite hitting time}.
We \guidonote{also} remark that by the definition of likelihood ratio we have that for all $B \subseteq \{ \tau_b < \infty \}$ it holds that $\PP \left( \{S_n\} \in B \right) = \EE^Q \left[ {\bf 1}_{B} \I{\tau_b < \infty} \cdot \dd \PP / \dd \QQ (S_n : 0 \! \leq \! n \! \leq \! \tau_b) \right]$.
Together with Definition~\ref{def:cond eff}, this identity gives the following equivalent condition \guidonote{for a proposal measure being direct for exact conditional sampling}.

\begin{corollary}\label{prop:efficiency}
The following statements are equivalent:
	\begin{enumerate}
	\item $\QQ$ is \guidonote{a direct proposal for exact} conditional sampling from $\PP (\, \cdot \, | \tau_b < \infty)$
	\item $\I{\tau_b < \infty} \cdot \dd \PP / \dd \QQ (S_n : 0 \! \leq \! n \! \leq \! \tau_b) \leq 1$ holds $\QQ$-a.s.
	\end{enumerate}
\end{corollary}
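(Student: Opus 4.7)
The plan is to reduce both statements to a single $\QQ$-integral inequality via the identity recalled just before the corollary,
\[
\PP(\{S_n\} \in B) \;=\; \EE^\QQ\!\left[ {\bf 1}_B \cdot \I{\tau_b < \infty} \cdot \frac{\dd\PP}{\dd\QQ}(S_n : 0 \leq n \leq \tau_b) \right],
\]
valid for every $B \in \FF_{\tau_b}$ with $B \subseteq \{\tau_b < \infty\}$. Writing $L := \dd\PP/\dd\QQ(S_n : 0 \leq n \leq \tau_b)$ for this likelihood ratio, the identity rephrases the defining inequality $\QQ(\{S_n\} \in B) \geq \PP(\{S_n\} \in B)$ as
\[
\EE^\QQ\bigl[ {\bf 1}_B \cdot (1 - \I{\tau_b<\infty} L) \bigr] \geq 0
\]
for all such $B$. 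Both implications of the corollary will fall out of this reformulation.

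For the direction (ii) $\Rightarrow$ (i), I would simply observe that if $\I{\tau_b<\infty} L \leq 1$ holds $\QQ$-a.s., then multiplying by ${\bf 1}_B \geq 0$ and taking $\EE^\QQ$ immediately yields $\PP(\{S_n\} \in B) \leq \QQ(\{S_n\} \in B)$ for every admissible $B$.

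For the converse (i) $\Rightarrow$ (ii), the argument is the standard ``test on the bad set''. Since $L$ is a product of transition-kernel ratios along the path up to the stopping time $\tau_b$, it is $\FF_{\tau_b}$-measurable, and therefore
\[
B^{\star} \;:=\; \{ \I{\tau_b<\infty} L > 1 \}
\]
lies in $\FF_{\tau_b}$; moreover $B^{\star} \subseteq \{\tau_b<\infty\}$ automatically, since on $\{\tau_b=\infty\}$ the indicator kills $L$. Plugging $B^{\star}$ into the reformulated inequality would force the $\QQ$-expectation of an integrand that is strictly negative on $B^{\star}$ to be nonnegative, so $\QQ(B^{\star}) = 0$, which is precisely statement (ii).

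The proof is essentially routine; the one minor subtlety is the ``strict for some such $B$'' clause in Definition~\ref{def:cond eff}, which is not captured by the pointwise bound (ii) alone. In the regime of interest, where $\QQ(\tau_b<\infty)=1$ while $\PP(\tau_b<\infty)<1$ (cf.~Proposition~\ref{prop:infinite hitting time}), this strictness is automatic by taking $B = \{\tau_b<\infty\}$; otherwise the clause should be read as a non-triviality condition implicit in the setup, namely that $\QQ$ and $\PP$ do not agree as distributions on paths reaching the barrier.
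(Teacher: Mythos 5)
Your proof is correct and takes essentially the same approach as the paper: the paper's own ``proof'' is the one-line remark preceding the corollary, stating that the identity $\PP(\{S_n\}\in B) = \EE^\QQ[\mathbf{1}_B \, \I{\tau_b<\infty}\, \dd\PP/\dd\QQ]$ combined with Definition~\ref{def:cond eff} gives the equivalence, and your argument simply fleshes out both directions of that equivalence (direct integration for (ii)~$\Rightarrow$~(i), the test-set $B^\star$ for (i)~$\Rightarrow$~(ii)). Your observation about the ``strict for some $B$'' clause is a genuine and worthwhile point that the paper silently elides: the pointwise bound (ii) does not by itself force strictness anywhere, and your resolution --- that strictness is automatic whenever $\QQ(\tau_b<\infty)=1$ while $\PP(\tau_b<\infty)<1$, which is the operative regime --- is the right way to close that gap.
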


\guidonote{\paragraph{An algorithmic equivalence and motivation.}
	We now show yet another equivalent condition for a measure to be a direct proposal for exact conditional sampling.
	Informally speaking, this is an ``algorithmic'' characterization, since it states the property of being a direct proposal as a \emph{correctness} property of a simulation algorithm.
	This algorithm has been a keystone of several recent exact simulation works, see e.g.~\cite{blanchet2011exact,dong2014strong,blanchet2014exact,blanchet2012steady,liu2016optimal}, since in particular it samples a Bernoulli random variable with parameter $\PP(\tau_b < \infty)$, without the need to know the actual value of $\PP(\tau_b < \infty)$.
	This algorithmic property initially motivated the research presented in the current paper, and also motivates the terminology for calling a proposal \emph{direct} for exact conditional sampling.
}

Consider the following procedure: sample a path $(S_0, \ldots, S_{\tau_b})$ from $\QQ$; set $I := 1$ if $\dd \PP / \dd \QQ (S_n : 0 \! \leq \! n \! \leq \! \tau_b) \leq 1$, and $I:=0$ otherwise; output $\left( I, (S_0, \ldots, S_{\tau_b}) \right)$.
\guidonote{It holds that if} $\QQ(\tau_b < \infty)=1$ then parts (i) and (ii) of Corollary~\ref{prop:efficiency} are \guidonote{actually} equivalent to the following statement: $I$ is distributed as a Bernoulli random variable with parameter $\PP(\tau_b < \infty)$ and if $I=1$ then the sample path $(S_0, \ldots, S_{\tau_b})$ follows the distribution $\PP(\, \cdot \, | \tau_b < \infty)$.
Indeed, this is direct from~\eqref{eq:AR 2} and~\eqref{eq:AR 1} using $C=1$, by Corollary~\ref{prop:efficiency} part~(ii).

\subsection{The Blanchet-Glynn change of measure}

We now present the essential ideas of the Blanchet-Glynn change of measure~\cite{blanchet2008efficient}.
This measure proved efficient for estimating the probability $\PP (\tau_b < \infty)$ as $b \to \infty$.
In the current paper, we are interested in its use for the \guidonote{exact} conditional sampling problem.

The main idea motivating the Blanchet-Glynn change of measure is to approximate the transition kernel of the conditional distribution.
Indeed, it is well-known that the one-step transition kernel of $\PP (\, \cdot \, | \tau_b < \infty)$, say $\mathsf{Q}^\bast$, satisfies
\begin{eqnarray}\label{eq:def P}
\mathsf{Q}^\bast (y, \ \dd z) = \mathsf{P}(y, \ \dd z) \cdot \guidonote{\frac{\mathbb{P}_z (\tau_b < \infty)}{\mathbb{P}_y (\tau_b < \infty)}},
\end{eqnarray}
where $\mathsf{P}$ is the original transition kernel of $\{S_n\}$; see~\cite[\S VI.7]{asmussen2007stochastic}.
Here, \guidonote{the term $\mathbb{P}_y (\tau_b < \infty)$ in the denominator of~\eqref{eq:def P}} can be interpreted as a normalizing term, since \guidonote{$\int \mathsf{P}(y, \ \dd z) \mathbb{P}_z (\tau_b < \infty) = \mathbb{P}_y (\tau_b < \infty)$} for all $y$.
It is nevertheless impractical to simulate from this kernel because \guidonote{usually the values of $\mathbb{P}_z (\tau_b < \infty)$ for $z<b$ are not readily known}.
\guidonote{To be consistent with the notation of Blanchet~and Glynn~\cite{blanchet2008efficient} we denote $u^*(x) := \mathbb{P}_x (\tau_0 < \infty)$ for all $x \in \mathbb{R}$; in particular, the definition of $\mathsf{Q}^\bast$ in~\eqref{eq:def P} takes the form
\begin{eqnarray}\label{def:P star BG}
\mathsf{Q}^\bast (y, \ \dd z) = \mathsf{P}(y, \ \dd z) \cdot \frac{u^*(z-b)}{u^* (y-b)}.
\end{eqnarray}
}

The idea put forth by Blanchet and Glynn is to approximate $u^*$ using the asymptotic approximation given by Pakes-Veraverbeke Theorem, see~\cite[Chapter 5]{foss2011introduction}.
This result states that
\begin{eqnarray}\label{res:PV}
u^* (x) = \mathbb{P}_x (\tau_0 < \infty) \sim \frac{1}{|\EE X|} \int_{-x}^\infty \PP(X > s) \ \dd s \quad \text{as} \quad x \to -\infty
\end{eqnarray}
for random walks with negative drift and step sizes $X$ which are (right) \emph{strongly subexponential}.
Inspired by this fact, the Blanchet-Glynn change of measure uses the following transition kernel:
\begin{eqnarray}\label{def:Q}
\mathsf{Q}^\bc (y, \ \dd z) := \mathsf{P}(y, \ \dd z) \cdot \frac{v (z-b-c)}{w (y-b-c)} ,
\end{eqnarray}
where
\begin{eqnarray*}
v (x) &:=& \min \left\{1, \frac{1}{|\EE X|} \int_{-x}^\infty \PP(X > s) \ \dd s \right\} \label{def:v}
\end{eqnarray*}
and $w(y-b-c) := \int \mathsf{P}(y, \ \dd z) w(z-b-c) = \int \mathsf{P}(y-b, \ \dd z) v(z-b-c)$ is a normalizing term.
The constant $c \in \R$ is a translation parameter, which in~\cite{blanchet2008efficient} and in our work, we will see, is eventually chosen sufficiently large.
\guidonote{Nonetheless, a heuristic but ultimately fallacious argument for choosing $c$ large is that if we could choose ``$c=\infty$'' then by the Pakes-Veraverbeke asymptotic result~\eqref{res:PV} we would have that ``$\mathsf{Q}^\bc = \mathsf{Q}^\bast$'', i.e., the Blanchet-Glynn measure $\mathsf{Q}^\bc$ matches the conditional one step transition kernel $\mathsf{Q}^\bast$.
}

In proving \guidonote{our} results for this transition kernel we heavily rely on the fact that the functions $v$ and $w$ are closely related to the \emph{residual life tail distribution of $X$}.
That is, a random variable $Z$ with distribution given by
\begin{eqnarray}\label{def:Z}
\PP(Z>t) := \min \left\{1, \frac{1}{|\EE X|} \int_{t}^\infty \PP(X > s) \ \dd s \right\} \qquad \text{ for all $t$.}
\end{eqnarray}
We thus have $v (x)=\PP(Z>-x)$ and $w (x) = \PP(X+Z>-x)$ for all $x$\guidonote{, and in particular $\mathsf{Q}^\bc (y, \ \cdot) = \QQ^\bc (S_1-S_0 \in \cdot \ | S_0 = y) = \PP \left( X \in \cdot \ | X+Z>c+b-y \right)$, where $X$ and $Z$ are independent}.
For further details we refer the reader to~\cite{blanchet2008efficient} and~\cite[\S VI.7]{asmussen2007stochastic}.

\section{Main result: a threshold for  \guidonote{being a direct proposal}}\label{sec:main}

In this section we present our main result \guidonote{on whether the Blanchet-Glynn~\cite{blanchet2008efficient} change of measure is a direct proposal for exact conditional sampling of random walks with regularly varying step sizes}.
\guidonote{Our main result is Theorem~\ref{theo:pareto eff}, which establishes a dichotomy for the tail index of the step size distribution.}
We give two results from which our main result easily follows.
The first \guidonote{is a characterization for when the Blanchet-Glynn change of measure is a direct proposal for exact conditional sampling}, and the second explores this characterization in the case of regularly varying step sizes.
Lastly, we study the time at which the barrier is hit under the Blanchet-Glynn measure.

\paragraph{Main result.}
We now describe the main result of this paper.
We work under the following assumptions on the distribution of the step sizes, in addition to the assumption of negative drift, i.e., $\EE X < 0$.

\paragraph{Assumptions:}
\begin{itemize}
	\item[(A1)] \emph{The right tail $\PP(X^+ >\cdot)$ is regularly varying with \emph{tail index} $\alpha > 1$; that is, for all $u>0$ we have $\PP(X>u t) \sim u^{-\alpha}\PP(X>t)$ as $t \to \infty$.}
	\item[(A2)] \emph{The left tail $\PP(X^- > \cdot)$ decays fast enough so that there exists a function $h(t)=o(t)$ such that $h(t) \to \infty$ and $\int_{h(t)}^\infty \PP(X^->s) \ \dd s =o(t \cdot \PP(X^+>t))$ as $t \to \infty$.}
	\item[(A3)] \emph{The step size distribution has a continuous density which is regularly varying with tail index $\alpha+1$.}
\end{itemize}

We note that the more natural condition $\PP(X^- > t) = o (\PP(X^+>t))$ as $t \to \infty$ not necessarily implies Assumption~(A2); although it does imply that $\int_{h(t)}^\infty \PP(X^->s) \ \dd s = o\left( h(t) \cdot \PP(X^+>h(t)) \right)$ for all $h$ such that $h(t) \to \infty$.
Nonetheless, Assumption~(A2) is not overly restrictive.
Indeed, a stronger condition is that there exists $\delta>0$ such that $t^\delta \cdot\PP(X^- > t) = O (\PP(X^+>t))$ as $t \to \infty$; the latter holds for instance when $\PP(X^- > \cdot)$ is light-tailed, or when $\PP(X^- > \cdot)$ is regularly varying with tail index $\beta$ satisfying $\beta>\alpha$.
We also note that Assumption~(A3) can be replaced by the less restrictive assumption that the step size distribution be ultimately absolutely continuous with respect to Lebesgue measure, with continuous and regularly varying density.
More precisely, it can be replaced by the assumption that there exists some $t_0$ such that on $[t_0, \infty)$ the step size distribution has a continuous density $f(\cdot)$ which is regularly varying with tail index $\alpha+1$.

The main result of this paper follows.

\begin{theorem}[\guidonote{Direct proposal with} regularly varying right tails]\label{theo:pareto eff}
Let $\QQ^\bc$ be the distribution of $\{S_n\}$ induced by the transition kernel $\mathsf{Q}^\bc$ defined in~\eqref{def:Q}.
Under Assumptions~(A1)--(A3) the following hold:
\begin{enumerate}
	\item If $\alpha \in (1, \ 3/2)$ then there exists some sufficiently large $c$ so that $\QQ^\bc$ is \guidonote{a direct proposal for exact} conditional sampling from $\PP (\, \cdot \, | \tau_b < \infty)$ for all $b \geq 0$.
	\item If $\alpha \in (3/2, \ 2)$ then for all $c \in  \R$ and all $b \geq 0$ it holds that $\QQ^\bc$ is not \guidonote{a direct proposal for exact} conditional sampling from $\PP (\, \cdot \, | \tau_b < \infty)$.
\end{enumerate}
\end{theorem}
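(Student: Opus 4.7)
My plan is to apply Corollary~\ref{prop:efficiency} to reduce both parts of the theorem to checking when $L_{\tau_b}\mathbf{1}_{\tau_b<\infty}\leq 1$ holds $\QQ^\bc$-a.s. First I would use the product structure of $\mathsf{Q}^\bc$ in~\eqref{def:Q} and telescope the likelihood ratio starting from $S_0=0$, obtaining
\[
L_{\tau_b}=\frac{v(-b-c)}{v(S_{\tau_b}-b-c)}\prod_{k=0}^{\tau_b-1}\frac{w(S_k-b-c)}{v(S_k-b-c)}.
\]
On $\{\tau_b<\infty\}$ the boundary factor is $\leq 1$ by monotonicity of $v$ together with $S_{\tau_b}>b\geq 0$, so the direct-proposal property is controlled by the pointwise sign of $w(x)-v(x)$ for $x\leq -c$, equivalently of $\PP(X+Z>t)-\PP(Z>t)$ for $t$ large.

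The core step will be a second-order asymptotic expansion of
\[
\PP(X+Z>t)-\PP(Z>t)=\EE[\bar F_Z(t-X)-\bar F_Z(t)]
\]
as $t\to\infty$ under Assumptions~(A1)--(A3). The leading behavior is exact cancellation: the Taylor mean contribution $\EE X\cdot f_Z(t)=-\bar F_X(t)$ (using the identity $f_Z=\bar F_X/|\EE X|$) exactly cancels the single-big-jump contribution $+\bar F_X(t)$ from the event $\{X>t\}$. Past this cancellation, several sources contribute at the next scale $t\bar F_X(t)^2$: the second-order Taylor term with $\EE[X^2\mathbf{1}_{|X|\leq t}]\sim\frac{2}{2-\alpha}t^2\bar F_X(t)$; the heavy-tail correction to the truncated mean, $\EE[X\mathbf{1}_{|X|\leq t}]-\EE X\sim-\frac{\alpha}{\alpha-1}t\bar F_X(t)$; the intermediate jump region $X\sim\delta t$, estimable sharply through the regularly varying density from Assumption~(A3); and the subleading part of the single-big-jump term. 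Assumption~(A2) would be used to dispose of the symmetric left-tail contributions. After carefully combining these pieces, the net coefficient of $t\bar F_X(t)^2$ changes sign at $\alpha=3/2$, producing the claimed threshold.

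Part~(i) then follows immediately: for $\alpha\in(1,3/2)$ the coefficient is negative, so $w(x)<v(x)$ for $x$ sufficiently negative, and choosing $c$ beyond the corresponding threshold forces every pre-crossing factor in the product to be $\leq 1$, hence $L_{\tau_b}\leq 1$ $\QQ^\bc$-a.s. For part~(ii) with $\alpha\in(3/2,2)$ the same expansion yields $w(x)>v(x)$ for $x\leq -T_0$ regardless of the choice of $c$; I would then construct an explicit event $B\in\FF_{\tau_b}\cap\{\tau_b<\infty\}$ of positive $\QQ^\bc$-measure on which the accumulated excess $\prod_{k=0}^{\tau_b-1} w(S_k-b-c)/v(S_k-b-c)$ overcomes the boundary ratio $v(-b-c)/v(S_{\tau_b}-b-c)$, forcing $L_{\tau_b}>1$. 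The hardest step will be the second-order expansion itself: reconciling the $\epsilon$-dependent divergences of the bulk Taylor and intermediate-jump integrals so that they cancel, and controlling the third-order Taylor remainder (which shares the $t\bar F_X(t)^2$ scaling and demands sharp use of the density regularity from Assumption~(A3)) are the delicate technical obstacles where most of the work will lie.
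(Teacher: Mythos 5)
Your reduction steps match the paper exactly: Corollary~\ref{prop:efficiency} converts the direct-proposal property into a $\QQ^\bc$-a.s.\ bound on the likelihood ratio, telescoping gives the formula you wrote (equivalent to the paper's form after absorbing the $k=0$ factor), and the monotonicity-of-$v$ argument for the boundary factor matches the paper's proof of Proposition~\ref{prop:efficiency characterization}. Your Part~(ii) strategy of constructing a $\QQ^\bc$-positive event where the accumulated excess overwhelms the boundary ratio is also what the paper does (via the set $Y_{>1}^{c,b}$ and the geometric-trials/compactness argument in Appendix~A); note that this uses $\PP(|X|\leq\delta)>0$, which holds under Assumption~(A3).

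Where you genuinely diverge is the second-order asymptotics. You condition on $X$ and write the difference as $\EE[\bar F_Z(t-X)-\bar F_Z(t)]$, then Taylor-expand $\bar F_Z$ about $t$ on the bulk region $|X|\leq\epsilon t$, separating out intermediate and big-jump regions and reconciling the $\epsilon$-dependent divergences. The paper instead conditions on $Z$ (using its density $\bar F_X/|\EE X|$), splits the range of the resulting convolution integral at a cutoff $h(t)\to\infty$, $h(t)=o(t)$, and identifies two leading terms $p(t)-q(t)$ at scale $t\overline F(t)^2$ whose ratio with $t\overline F(t)^2$ converges by the Uniform Convergence and Karamata Theorems alone --- no Taylor remainder to control, since uniform convergence of regularly varying functions does the work that you assign to a density-regularity bound. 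Your route is workable but, as you note, requires delicate bookkeeping of the $\epsilon$-dependent terms; the paper's decomposition makes these cancellations automatic because $h(t)/t\to 0$ rather than being a fixed $\epsilon$. One concrete slip: the truncated second moment is $\EE[X^2\mathbf{1}_{|X|\leq t}]\sim\frac{\alpha}{2-\alpha}t^2\overline F_X(t)$, not $\frac{2}{2-\alpha}t^2\overline F_X(t)$; with that corrected and the full accounting done, the coefficient does change sign at $\alpha=3/2$, matching the paper's Lemma~\ref{lemma:asymptotics sign}.
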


It is noteworthy that the change of measure is \guidonote{direct for exact} conditional sampling only for step sizes with very heavy tails.
Indeed, recall that the tail index $\alpha$ is an indicator of how heavy a tail is, c.f.~$\EE \left[ (X^+)^p \right] < \infty$ for $p \in (1,\alpha)$ and $\EE \left[ (X^+)^p \right] = \infty$ for $p > \alpha$.


\paragraph{Proof elements.}
We show here the main elements of the proof of Theorem~\ref{theo:pareto eff}, and start by investigating \guidonote{in Proposition~\ref{prop:efficiency characterization}} how the following statements are related.
%
The proof is deferred to Appendix~\ref{app:eff char}.
\begin{itemize}
	\item[($\text{S1}_b^c$)] The distribution $\QQ^\bc$ induced by the Blanchet-Glynn kernel~\eqref{def:Q} is \guidonote{direct for exact} for conditional sampling from $\PP(\, \cdot \, | \tau_b < \infty)$.
	\item[(S2)] We have $\PP(X+Z>t) \leq \PP(Z>t)$ for all sufficiently large $t$, where $Z$ has the residual life distribution~\eqref{def:Z} and is independent of $X$.
\end{itemize}

\begin{proposition}\label{prop:efficiency characterization}
\begin{enumerate}
	\item If (S2) holds, then there exists some sufficiently large $c$ so that ($\text{S1}_b^c$) holds for all $b \geq 0$.
	\item Suppose that $\PP(|X| \leq \delta) > 0$ for all $\delta>0$.
	If ($\text{S1}_b^c$) holds for some $b \geq 0$ and some $c \in \R$, then (S2) also holds.
\end{enumerate}
\end{proposition}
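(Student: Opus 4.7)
Both parts hinge on Corollary~\ref{prop:efficiency}, which recasts $(\text{S1}_b^c)$ as the almost-sure bound $L_{\tau_b}\I{\tau_b<\infty}\leq 1$ under $\QQ^\bc$, combined with the telescoping factorisation
\[
L_{\tau_b} \;=\; \frac{w(-b-c)}{v(S_{\tau_b}-b-c)}\prod_{i=1}^{\tau_b-1}\phi(S_i),\qquad \phi(y) := \frac{w(y-b-c)}{v(y-b-c)}.
\]
The inequality $\phi(y)\leq 1$ is exactly (S2) at $t=b+c-y$, since $w(y-b-c)=\PP(X+Z>b+c-y)$ and $v(y-b-c)=\PP(Z>b+c-y)$.

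\emph{Part (i).} I would choose $c$ large enough that (S2) delivers $\PP(X+Z>t)\leq\PP(Z>t)$ for every $t\geq c$. Since $S_i\leq b$ whenever $i<\tau_b$, one has $b+c-S_i\geq c$, so $\phi(S_i)\leq 1$ term by term and the product is $\leq 1$. For the prefactor, (S2) at $t=b+c$ gives $w(-b-c)\leq v(-b-c)$, while monotonicity of $v$ together with $S_{\tau_b}\geq 0$ gives $v(-b-c)\leq v(S_{\tau_b}-b-c)$. Hence $L_{\tau_b}\I{\tau_b<\infty}\leq 1$ and Corollary~\ref{prop:efficiency} yields $(\text{S1}_b^c)$ for every $b\geq 0$.

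\emph{Part (ii).} Assuming $L_{\tau_b}\I{\tau_b<\infty}\leq 1$ $\QQ^\bc$-a.s., the plan is to show $\phi(y)\leq 1$ at every level $y$ the walk can visit repeatedly before crossing $b$, and then stitch these pointwise inequalities into (S2). Given such a $y$ and any $m\in\N$, I would construct the event on which the walk first follows a prescribed $\PP$-positive-probability path of $k$ steps landing within a small neighborhood of $y$, then takes $m$ subsequent increments with $|X_i|<\delta$---positive probability by the hypothesis $\PP(|X|\leq\delta)>0$---keeping it near $y$, and finally exceeds $b$ in a single step, whose positivity comes from the unbounded right support of $X$. The absolute continuity $\mathsf{Q}^\bc\sim\mathsf{P}$ transfers positivity of each step to $\QQ^\bc$. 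Along realisations in this event the $m$ middle factors each contribute approximately $\phi(y)$, so $L_{\tau_b}\approx C_{y,k}\,\phi(y)^m$ for a bounded constant $C_{y,k}$; the almost-sure bound then forces $\phi(y)^m\leq C_{y,k}^{-1}$ for every $m$, which is possible only if $\phi(y)\leq 1$.

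The reachable levels include arbitrarily negative $y$: since $\EE X<0$ the support of $X$ contains some $x_0<0$, and the $k$-fold sums traverse $(-\infty,0]$ as $k$ varies; under the density hypotheses of the ambient Theorem~\ref{theo:pareto eff} these levels fill a topologically dense set, so continuity of $\phi$ upgrades $\phi(y)\leq 1$ to all sufficiently negative $y$, i.e., (S2) for all sufficiently large $t$. The main obstacle is this last step: the loop construction must keep $S_i$ close enough to $y$ that the approximation $\phi(S_i)\approx\phi(y)$ survives as $m\to\infty$, which requires a careful joint choice of the neighborhood radius $\delta$, the number of loop steps $m$, and the continuity modulus of $\phi$, all while preserving positive $\QQ^\bc$-probability of the constructed event.
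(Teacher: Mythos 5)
Your argument is correct and is essentially the paper's own proof: choose $c$ so that (S2) holds for all $t \geq c$, observe that $S_i \leq b$ for $i < \tau_b$ makes every product factor $\phi(S_i) \leq 1$, and control the prefactor using (S2) at $t=b+c$ together with the monotonicity of $v$ and $S_{\tau_b} \geq S_0 = 0$. Nothing to add.

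\textbf{Part (ii).}
The core mechanism you identify --- let the walk loop $m$ times near a level $y$ to accumulate roughly $\phi(y)^m$ in the likelihood ratio, then jump over the barrier, and use the $\QQ^\bc$-a.s.\ bound of Corollary~\ref{prop:efficiency} to force $\phi(y) \leq 1$ --- is exactly the engine of the paper's proof (Case~1 of the Appendix). However, your overall architecture differs in a way that creates a genuine gap.

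You argue \emph{directly}: prove $\phi(y)\leq 1$ for every reachable level $y$, and then try to upgrade this to $\phi(y)\leq 1$ for \emph{all} sufficiently negative $y$, which is what (S2) requires. For the upgrade you invoke that ``the $k$-fold sums traverse $(-\infty,0]$'' and appeal to ``the density hypotheses of the ambient Theorem~\ref{theo:pareto eff}.'' Neither of these is available: Proposition~\ref{prop:efficiency characterization} is stated and proved without Assumptions~(A1)--(A3), and for a general increment distribution with negative mean, unbounded right support, and $\PP(|X|\leq\delta)>0$ the reachable set need not be dense (the support of $X$ can be concentrated on a sparse set away from $0$). So the stitching step does not go through in the setting of the proposition.

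The paper avoids this obstacle by arguing by \emph{contrapositive}: assume (S2) fails, so that the open set $Y^{c,b}_{>1} = \{y\leq b:\phi(y)>1\}$ is nonempty (and has components arbitrarily far to the left). Then one only needs the walk, starting from $0$, to reach \emph{some} point of $Y^{c,b}_{>1}$ with positive $\PP$-probability (handled via the hitting time $\tau_{>1}$ and a compact set $A$ in Case~2), stay there long enough using $\PP(|X|\leq\delta)>0$, and then cross $b$ by unbounded right support; absolute continuity of $\PP\ll\QQ^\bc$ transfers the positivity to $\QQ^\bc$ and contradicts (S1). This is a strictly weaker reachability requirement than density of the reachable set.

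One smaller point: the ``main obstacle'' you flag --- keeping $\phi(S_i)\approx\phi(y)$ as $m\to\infty$ --- is handled more cleanly than you suggest. Since $\phi$ is continuous and $\phi(y_0)>1$, one has $\phi>1+\epsilon$ on a whole neighborhood $U$ of $y_0$; keeping the walk in $U$ for $m$ steps gives a product $\geq (1+\epsilon)^m \to \infty$, with no need for a quantitative approximation of $\phi(S_i)$ by $\phi(y_0)$. This is exactly why the paper only needs an \emph{open} set where $\phi>1$, and why the contrapositive formulation is the natural one here.

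In short: part (i) matches the paper; for part (ii) you have the right local construction but the direct formulation forces you to prove a pointwise inequality everywhere, which the hypotheses do not support, whereas the paper's contradiction argument only requires exhibiting one bad region and a path into it.
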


We remark that part (i) says that the same parameter $c$, chosen sufficiently large, works for \emph{all} barriers $b \geq 0$; that is, $b$ is independent of $c$ in this case.
We also remark that in the case of (ii), applying (i) we get that ($\text{S1}_b^c$) actually holds \emph{for all} $b \geq 0$, possibly after changing the constant $c$.

It is shown in~\cite{blanchet2008efficient} that $\PP(X+Z>t)-\PP(Z>t) = o\left( \PP(X>t) \right)$ as $t \to \infty$ for the family of strongly subexponential distributions, which includes regularly varying tails.
Hence, the previous proposition shows that for \guidonote{a measure to be direct for exact} conditional sampling it is not enough to know that the difference decays faster than $\PP(X> t)$, we actually need the sign of  the difference as $t \to \infty$.

The following result shows that, in the case of step sizes satisfying Assumptions~(A1)--(A2), the sign of $\PP(X+Z>t)-\PP(Z>t)$ when $t \to \infty$ is fully determined by the tail index $\alpha$ of the right tail distribution.
The proof is given in Section~\ref{sec:proof}.

\begin{theorem}\label{theo:pareto case}
\guidonote{Suppose} that Assumptions~(A1)--(A3) hold.
Let $Z$ be a random variable independent of $X$ with the residual life distribution~\eqref{def:Z}.
Then the following statements hold:
\begin{enumerate}
\item If $\alpha \in (1,\ 3/2)$ then $\PP(X+Z>t) \leq \PP(Z>t)$ for all $t>0$ sufficiently large.
\item If $\alpha \in (3/2,\ 2)$ then $\PP(X+Z>t) \geq \PP(Z>t)$ for all $t>0$ sufficiently large.
\end{enumerate}
\end{theorem}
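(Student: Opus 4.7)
My plan is to carry out a second-order asymptotic analysis of $\phi(t) := \PP(X+Z>t) - \PP(Z>t)$, isolating a coefficient whose sign depends on whether $\alpha$ lies below or above $3/2$. Writing $\bar F(t) := \PP(X>t)$, $m := |\EE X|$, and letting $t^* \geq 0$ denote the threshold beyond which $Z$ has density $\bar F(s)/m$, the telescoping identity $\PP(Z>t-X) - \PP(Z>t) = \tfrac{1}{m}\int_0^X \bar F(t-u)\,\I{t-u>t^*}\,\dd u$ combined with Fubini yields
\begin{align*}
m\phi(t) \;=\; \int_0^{t-t^*} \bar F(t-u)\,\bar F(u)\,\dd u \;-\; \int_0^\infty \bar F(t+u)\,\PP(X^->u)\,\dd u \;=:\; J_1(t) - I_2(t).
\end{align*}

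Under Assumptions~(A1)--(A3), I will expand both $J_1$ and $I_2$ up to order $t\bar F(t)^2$ (which, for $\alpha>1$, is smaller than $\bar F(t)$ because $t\bar F(t) \to 0$). The symmetry $u \leftrightarrow t-u$ gives $J_1(t) = 2\int_0^{t/2}\bar F(u)\bar F(t-u)\,\dd u + O(\bar F(t))$; then splitting $\bar F(t-u) = \bar F(t) + [\bar F(t-u) - \bar F(t)]$, Karamata's theorem yields $\bar F(t)\int_{t/2}^\infty \bar F(u)\,\dd u \sim \tfrac{2^{\alpha-1}}{\alpha-1}\,t\bar F(t)^2$, while the substitution $u = tv$ together with Potter-type bounds (justified by (A1) and (A3)) gives
\begin{align*}
\int_0^{t/2} \bar F(u)\,[\bar F(t-u) - \bar F(t)]\,\dd u \;\sim\; C_1(\alpha)\,t\bar F(t)^2, \quad C_1(\alpha) := \int_0^{1/2} v^{-\alpha}\big[(1-v)^{-\alpha}-1\big]\,\dd v,
\end{align*}
an improper integral which converges for $\alpha \in (1,2)$ since the integrand behaves like $\alpha\,v^{1-\alpha}$ near $v=0$. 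For $I_2$, Assumption~(A2) controls the contribution from $u > h(t) = o(t)$ and produces $I_2(t) = \EE X^-\,\bar F(t) + o(t\bar F(t)^2)$. Because $\EE X^- - \EE X^+ = m$, the $\bar F(t)$-order terms in $J_1$ and $I_2$ cancel exactly (recovering the $o(\bar F(t))$ result already recorded from~\cite{blanchet2008efficient}), leaving
\begin{align*}
m\phi(t) \;=\; 2\bigg[C_1(\alpha) - \frac{2^{\alpha-1}}{\alpha-1}\bigg]\,t\bar F(t)^2\,(1+o(1)).
\end{align*}

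It then remains to determine the sign of the bracket. Using the identity $(1-v)^{-\alpha}-1 = \alpha\int_0^v (1-w)^{-\alpha-1}\,\dd w$, swapping integrals via Fubini, and substituting $w = s/(1+s)$, I will reduce to the clean expression
\begin{align*}
C_1(\alpha) - \frac{2^{\alpha-1}}{\alpha-1} \;=\; \frac{\alpha\, I_0(\alpha) - 2^{2\alpha-1}}{\alpha-1}, \qquad I_0(\alpha) := \int_0^1 s^{1-\alpha}(1+s)^{2\alpha-2}\,\dd s.
\end{align*}
A one-line evaluation gives $I_0(3/2) = \int_0^1 (s^{-1/2}+s^{1/2})\,\dd s = 8/3$, so $\alpha\,I_0(\alpha) - 2^{2\alpha-1}$ vanishes precisely at $\alpha = 3/2$, and the full dichotomy then follows from strict monotonicity of $\alpha \mapsto \alpha I_0(\alpha) - 2^{2\alpha-1}$ on $(1,2)$. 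I expect this last monotonicity claim to be the main obstacle: differentiation leads to a transcendental inequality involving $I_0(\alpha)$, $I_0'(\alpha)$ and $2^{2\alpha-1}\ln 4$, and since $I_0$ admits no elementary closed form for $\alpha \neq 3/2$, establishing it uniformly on $(1,2)$ likely requires careful analytic bounds on $I_0'$ rather than a direct reduction.
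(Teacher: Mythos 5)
Your decomposition $m\phi(t) = J_1(t) - I_2(t)$ and the ensuing asymptotic expansion are in the same spirit as the paper's: the paper splits $\PP(X+Z>t)-\PP(Z>t)$ into four terms $p(t)-q(t)+\eo(t)-\et(t)$ using a cutoff $h(t)$ from Assumption~(A2), shows $\eo,\et = o(t\overline F(t)^2)$, and extracts exactly the leading coefficient you obtain, $\frac{2t\overline F(t)^2}{|\EE X|}\bigl(C_1(\alpha)-\tfrac{2^{\alpha-1}}{\alpha-1}\bigr)$. Your Fubini reduction to $C_1(\alpha)-\tfrac{2^{\alpha-1}}{\alpha-1}=\frac{\alpha I_0(\alpha)-2^{2\alpha-1}}{\alpha-1}$ with $I_0(\alpha)=\int_0^1 s^{1-\alpha}(1+s)^{2\alpha-2}\,\dd s$ is algebraically correct, as is the evaluation $I_0(3/2)=8/3$.

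The genuine gap is exactly where you flag it: you have not established that $\alpha\mapsto\alpha I_0(\alpha)-2^{2\alpha-1}$ is strictly monotone on $(1,2)$, and the crude bound $I_0'(\alpha)\geq (\ln 4)\,I_0(\alpha)$ (from $\ln\tfrac{(1+s)^2}{s}\geq\ln 4$ on $(0,1]$) is not enough near $\alpha=1$ (it gives $I_0(\alpha)(1+\alpha\ln 4)-2^{2\alpha-1}\ln 4\to 1+\ln 4 - 2\ln 4 < 0$ as $\alpha\downarrow 1$). The Fubini step, while correct, is a detour that obscures the monotonicity rather than revealing it. The paper's argument works directly on the pre-Fubini form and is immediate: for each fixed $u\in(0,1/2)$, both $u^{-\alpha}$ and $(1-u)^{-\alpha}-1$ are positive and strictly increasing in $\alpha$ (since $0<u,1-u<1$), so the integrand and hence $C_1(\alpha)$ is strictly increasing in $\alpha$; separately, $\beta\mapsto 2^{\beta}/\beta$ is strictly decreasing on $(0,1)$ (its derivative has the sign of $\beta\ln 2 - 1<0$), so $-2^{\alpha-1}/(\alpha-1)$ is strictly increasing on $(1,2)$. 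The sum is therefore strictly increasing and vanishes at $\alpha=3/2$, which settles the sign. You should replace the $I_0$ monotonicity step by this elementary pointwise argument to close the proof.
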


With this, Theorem~\ref{theo:pareto eff} is a corollary of Proposition~\ref{prop:efficiency characterization} and Theorem~\ref{theo:pareto case}.


\paragraph{Hitting time analysis.}
We now investigate the finiteness and mean value of the hitting time $\tau_b$ under the Blanchet-Glynn change of measure $\QQ^\bc$.
The motivation is that $\tau_b$ gives a rough estimate of the computational effort of sampling a barrier-crossing path using the measure $\QQ^\bc$.

The following result explores the hitting time in the regularly varying right tails setting of Assumptions~(A1)--(A2).
Its proof is deferred to Appendix~\ref{app:hitting}.

\begin{proposition}[Hitting time under $\QQ^\bc$]\label{prop:infinite hitting time}
Let $\QQ$ be the distribution of $\{S_n\}$ induced by the transition kernel $\mathsf{Q}^\bc$ defined in~\eqref{def:Q}.
Consider the setting of Assumptions~(A1)--(A3).
For any sufficiently large $c$ the following hold for all $b \geq 0$:
\begin{enumerate}
	\item If $\alpha > 1$ then $\QQ^\bc(\tau_b < \infty) = 1$.
	\item If $\alpha \in (1,3/2)$ then $\EE^{\QQ^\bc} \tau_b = \infty$ for all $b \geq 0$.
	\item If $\alpha > 2$ then $\EE^{\QQ^\bc} \tau_b = O(b)$ as $b \to \infty$.
\end{enumerate}
\end{proposition}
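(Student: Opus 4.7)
My plan is to address the three parts separately. Part~(ii) will follow from the efficiency characterization in Theorem~\ref{theo:pareto eff}(i) together with classical heavy-tailed random-walk asymptotics; parts~(i) and (iii) will rely on the one-step representation $\mathsf{Q}^\bc(y,\,\cdot) = \PP(X \in \cdot \mid X+Z > b+c-y)$ noted at the end of Section~\ref{sec:preliminaries}.

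For part~(i), I would derive a lower bound on the one-step crossing probability from state $y$. Using the inclusion $\{X > b+c-y,\, Z > 0\} \subseteq \{X > b-y,\, X+Z > b+c-y\}$ and independence of $X,Z$,
\begin{eqnarray*}
\mathsf{Q}^\bc(y,(b,\infty)) \;=\; \PP(X > b-y \mid X+Z > b+c-y) \;\geq\; \frac{\PP(X > b+c-y)\cdot\PP(Z > 0)}{\PP(X+Z > b+c-y)}.
\end{eqnarray*}
By Assumption~(A1) and Karamata's theorem, $\PP(Z > t) \sim t\,\PP(X>t)/(|\EE X|(\alpha-1))$ and $\PP(X+Z>t)\sim\PP(Z>t)$, so this lower bound is of order $1/(b+c-y)$ as $y \to -\infty$. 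Combined with a linear upper bound on $-S_n$ (the one-step mean under $\QQ^\bc$ is bounded by Assumptions~(A1)--(A2)), the quantity $\sum_{k=0}^n \mathsf{Q}^\bc(S_k,(b,\infty))$ diverges along typical sample paths, and a conditional Borel--Cantelli argument delivers $\QQ^\bc(\tau_b<\infty)=1$.

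For part~(ii), fix $\alpha\in(1,3/2)$ and take $c$ large enough so that Theorem~\ref{theo:pareto eff}(i) applies. Corollary~\ref{prop:efficiency} then gives $\I{\tau_b<\infty}\cdot \dd\PP/\dd\QQ^\bc\leq 1$ pointwise $\QQ^\bc$-a.s., hence $\QQ^\bc(A)\geq \PP(A)$ for every $A\in\FF_{\tau_b}$ with $A\subseteq\{\tau_b<\infty\}$. Taking $A_n = \{\tau_b>n,\,\tau_b<\infty\}$ and summing yields
\begin{eqnarray*}
\EE^{\QQ^\bc}\tau_b \;=\; \sum_{n=0}^\infty \QQ^\bc(\tau_b>n) \;\geq\; \sum_{n=0}^\infty \PP(\tau_b>n,\,\tau_b<\infty) \;=\; \EE^\PP\big[\tau_b\,\I{\tau_b<\infty}\big].
\end{eqnarray*}
The single-big-jump principle for heavy-tailed random walks gives $\PP(\tau_b>n,\,\tau_b<\infty)\sim C\,n^{-(\alpha-1)}$ as $n\to\infty$, and this is not summable for $\alpha \in (1,2)$, in particular for $\alpha\in(1,3/2)$.

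For part~(iii), with $\alpha>2$ the second moment of $X$ is finite. The plan is to show that the conditional drift $\mu(y) := \EE^{\QQ^\bc}[X_1\mid S_0=y]$ is bounded below by a positive constant for $y$ in a suitable effective range, and that $\EE^{\QQ^\bc}_0[S_{\tau_b}-b] = O(b)$. A Wald-type optional-stopping identity then delivers $\EE^{\QQ^\bc}_0\tau_b = O(b)$. The main obstacle will be this drift analysis: $\mu(y) = \EE[X\,\PP(Z>t-X)]/\PP(X+Z>t)$ with $t=b+c-y$ involves competing contributions — the original negative drift $\EE X$, a positive boost from the big-jump regime $\{X \gtrsim t\}$, and a moderate-deviation correction — and the finiteness of $\EE X^2$ is precisely what allows the big-jump boost to dominate, in line with the threshold $\alpha=2$ appearing in the statement.
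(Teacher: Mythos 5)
Your part (ii) matches the paper's proof essentially verbatim: take $c$ large enough that the direct-proposal property holds, invoke Corollary~\ref{prop:efficiency} to get $\QQ^\bc(A)\ge\PP(A)$ for $A\in\FF_{\tau_b}$ with $A\subseteq\{\tau_b<\infty\}$, and sum over $A_n=\{\tau_b>n,\tau_b<\infty\}$ to bound $\EE^{\QQ^\bc}\tau_b$ below by $\EE^\PP[\tau_b\I{\tau_b<\infty}]$, which is infinite by~\cite[Theorem~1.1]{asmussen1996large} for $\alpha\in(1,2)$. Part (i), however, has a gap. Your conditional Borel--Cantelli argument hinges on a ``linear upper bound on $-S_n$,'' i.e.\ on the walk drifting to $-\infty$ at most linearly. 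But the key fact about $\QQ^\bc$ (which the paper pulls from~\cite[Lemma~1]{blanchet2008efficient} and which you never invoke) is that $\lim_{y\to-\infty}\EE^{\QQ^\bc}[S_1-S_0\mid S_0=y]>0$: the drift becomes \emph{positive} far below the barrier. This means the walk does not drift to $-\infty$ at all; it is pushed back up, and in fact it is this eventual positivity that makes $\QQ^\bc(\tau_b<\infty)=1$ true. The paper uses this directly --- positivity gives $\QQ^\bc(\tau_{y_0}<\infty\mid S_0=y)=1$ for $y\le y_0$, and then a geometric-trials argument from the bounded interval $[y_0,b]$ finishes. Your Borel--Cantelli route could be salvaged, but only by first establishing exactly this recurrence to a bounded set, at which point the one-step lower bound you derive is no longer the load-bearing ingredient. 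Also note that the paper proves (i) for \emph{every} $c$, which your framing leaves unclear.

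For part (iii) you propose Wald's identity; the paper uses a Foster--Lyapunov drift criterion (Lemma~\ref{lemma:linear time}, extracted from~\cite[Lemma~2]{blanchet2012efficient}) with $h(y)=(C+|b-y|)\I{y\le b}$. The Wald route runs into the same issue as (i): the one-step drift $\mu(y)=\EE^{\QQ^\bc}[S_1-S_0\mid S_0=y]$ is only eventually positive (as $y\to-\infty$); it need not be uniformly bounded below by a positive constant over all $y\le b$, so a naive optional-stopping inequality $\EE S_{\tau_b}\ge \mu_0\,\EE\tau_b$ does not apply. Moreover, controlling $\EE^{\QQ^\bc}[S_{\tau_b}-b]$ requires knowing the overshoot tail under the \emph{changed} measure, which is itself nontrivial. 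The paper's Lyapunov condition~\eqref{ineq:linear hypothesis} elegantly packages both the drift and the overshoot into the single quantity $\EE^\QQ[S_1-S_0\mid S_0=y]-\int_{b-y}^\infty \QQ(S_1-S_0>u\mid S_0=y)\,\dd u$, whose $\liminf$ is shown to equal $(\alpha-2)|\EE X|$; the threshold $\alpha>2$ then falls out without any explicit second-moment argument. You acknowledge your drift analysis is incomplete; I would suggest adopting the Foster--Lyapunov formulation, which sidesteps both obstacles.
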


We remark that part (ii) of the previous proposition, although a negative result, is actually independent of the Blanchet-Glynn measure $\QQ^\bc$ and holds essentially because we have $\EE^{\PP}[\tau_b | \tau_b < \infty] = \infty$ when $\alpha \in (1,2)$, see e.g.~\cite[Theorem~1.1]{asmussen1996large}.
In other words, if $\alpha \in (1,2)$ no algorithm \guidonote{or change of measure} --- \guidonote{direct} or not --- sampling paths $(S_0, \ldots, S_{\tau_b})$ from $\PP (\, \cdot \, | \tau_b < \infty)$ can produce paths of finite expected length.

We also remark that part (ii) of Proposition~\ref{prop:infinite hitting time} is a counterexample for Proposition~4 of~\cite{blanchet2008efficient}.
Indeed, the latter result claims that we have $\EE^{\QQ^\bc} \tau_b = O(b)$ as $b \to \infty$ when the step sizes satisfy $\EE^\PP [X^p; \, X>0]<\infty$ for some $p>1$.
Part (ii) of Proposition~\ref{prop:infinite hitting time} shows that the latter condition is not enough in general.
Alternatively, this issue with Proposition~4 of~\cite{blanchet2008efficient} can also be derived from Corollary~1 of~\cite{blanchet2012efficient}.
The latter result shows that if $\alpha \in (1, 3/2)$ no change of measure can be at the same time \emph{strongly efficient for importance sampling} and have linear expected hitting time; in contrast, Proposition~4 of~\cite{blanchet2008efficient} states that for any $\alpha>1$ the Blanchet-Glynn measure is both \emph{strongly efficient for importance sampling} and has linear expected hitting time.
Clearly both results are contradictory.

\section{Threshold 3/2: a comparison}\label{sec:comparison}


In this section we compare the threshold result of Theorem~\ref{theo:pareto eff} with previous simulation works where, when using regularly varying step sizes, some form of efficiency of the method has given rise to the same threshold 3/2 for the tail index.
We argue that the threshold arises in this existing literature for reasons unrelated to our work.

\paragraph{Review.} Previous works in which the $3/2$ threshold appears in the context of efficiency are Blanchet and Liu~\cite{blanchet2012efficient} and Murthy, Juneja and Blanchet~\cite{murthy2014state}.
Both papers focus on solving the probability estimation problem via importance sampling; that is, their aim is to estimate the probability $\PP( \tau_b < \infty )$ for arbitrarily large barriers $b$, using Monte Carlo sampling from another measure.
Blanchet and Liu propose a parameterized and state-dependent change of measure, say $\QQ^{\mathsf{BL}}$, which in the regularly varying case takes the form of a mixture between a \emph{big-} and a \emph{small-jump} transition kernel.
Murthy \emph{et al.}~propose a similar big- and small-jump mixture kernel, say $\QQ^{\mathsf{MJB}}$, however their change of measure is state-independent and additionally conditions on the time interval at which the barrier-crossing event occurs.

Both Blanchet and Liu~\cite{blanchet2012efficient} and Murthy \emph{et al.}~\cite{murthy2014state} have two requirements on their proposed measures: (i) the linear scaling $\EE^\QQ \tau_b = O(b)$ as $b \to \infty$ of the hitting time, and (ii) \emph{strong efficiency} of the estimation procedure.
In short, the latter means that, under the proposed change of measure $\QQ$, the coefficient of variation of the random variable
\begin{eqnarray}\label{eq:LR eff}
\frac{\dd\PP}{\dd\QQ} (S_n : 0 \!\leq\! n \!\leq\! \tau_b) \cdot \I{\tau_b < \infty}
\end{eqnarray}
stays bounded as $b \to \infty$; this is a second moment condition on~\eqref{eq:LR eff}.
Both papers arrive at the same threshold result: for regularly varying step sizes, the proposed change of measure satisfies the previous two requirements for some combination of tuning parameters if and only if the tail index $\alpha$ is greater than 3/2.

\paragraph{Comparison.} Given that the same threshold appears, it is natural to ask if there is a connection between our result in Corollary~\ref{theo:pareto eff} and the results in prior work.
We now argue why there is no clear or direct connection between these results.

In Blanchet and Liu~\cite{blanchet2012efficient} and Murthy \emph{et al.}~\cite{murthy2014state} the threshold 3/2 is strictly related to the second moment condition over the likelihood ratio~\eqref{eq:LR eff} that is imposed by the requirement of efficiency for importance sampling.
More precisely, in both these works if a moment condition is imposed on a different moment than the second, then we get a different threshold for the admissible tail indexes. 
In contrast, our result arises from imposing an almost sure condition on the Blanchet-Glynn change of measure. 
Indeed, by Corollary~\ref{prop:efficiency}, \guidonote{the condition of a proposal measure being direct for exact} conditional sampling is a $\QQ$-almost sure condition on the random variable~\eqref{eq:LR eff}.
In contrast, and as said before, efficiency for importance sampling is a second moment condition on~\eqref{eq:LR eff}.

\section{Proof of Theorem~\ref{theo:pareto case}}\label{sec:proof}

In this section we prove Theorem~\ref{theo:pareto case}, which is the main component of our main result of Theorem~\ref{theo:pareto eff}.
That is, we prove that the tail index $\alpha$ completely determines the sign of the difference
\begin{eqnarray}\label{proof:difference}
\PP(X+Z>t)-\PP(Z>t)
\end{eqnarray}
when $t$ is large enough.
We work under Assumptions~(A1)--(A3), which state roughly speaking that the step sizes have regularly varying right tails with tail index $\alpha$, and lighter left tails.
In short, Theorem~\ref{theo:pareto case} establishes that if $\alpha \in (1, \, 3/2)$ then the difference~\eqref{proof:difference} is negative for large $t$, and positive if $\alpha \in (3/2, \, 2)$.

The following is a roadmap for the main steps of the proof.
First, in Lemma~\ref{lemma:decomposition} we write the difference~\eqref{proof:difference} as a sum of several terms.
Second, in Lemma~\ref{lemma:asymptotics} we carry out an asymptotic analysis to determine which terms dominate when $t \to \infty$.
It follows that the sign of the difference~\eqref{proof:difference} when $t \to \infty$ can be reduced to the sign of the sum of dominant terms when $t \to \infty$.
Finally, the latter is analyzed in Lemma~\ref{lemma:asymptotics sign}, which reveals the dichotomy for $\alpha$ in $(1, \, 3/2)$ or $(3/2, \, 2)$.

Before embarking on the proof, some remarks on our notation are in order.
Recall that we say that the random variable $Z$ has the \emph{residual life distribution of $X$} if its distribution is given by
\begin{eqnarray*}
\PP(Z>t) &=& \min \left\{1, \, \frac{1}{| \EE X |} \int_{t}^\infty \PP(X > s) \ \dd s  \right\}, \quad \text{for all } t.
\end{eqnarray*}
We write the left-most point of the support of $Z$ as $z_0 := \inf \{ t \ : \ \PP(Z>t)<1 \}$, which is finite since $\EE X$ is also finite.
Additionally, we use that the density of $Z$ is $\PP(X>t) / |\EE X|$ for all $t > z_0$ and that $\int_{z_0}^\infty \PP(X > s) \ \dd s = | \EE X |$.
We also use the notation \guidonote{$F(t) := \PP(X \leq t)$ and} $\overline{F}(t) := \PP(X > t)$ for all $t$.
Lastly, we recall that Assumption~(A1) establishes that the right tail $\PP(X>\cdot)$ is regularly varying with \emph{tail index} $\alpha > 1$.

We start with a general decomposition of the difference~\eqref{proof:difference}.

\begin{lemma}\label{lemma:decomposition}
Let $X$ be a random variable with negative mean, and let $Z$ be independent of $X$ with the residual life distribution of $X$.
Consider any function $h$ such that $\max\{z_0, 0\} < h(t) < t/2$ for all $t > \max\{2 z_0, 0\}$.
Then the following holds for $t > \max\{2 z_0, 0\}$:
\begin{eqnarray*}
\PP(X+Z>t)-\PP(Z>t) &=& \ppp (t) - \qqq (t) + \eo(t) - \et(t),
\end{eqnarray*}
where we define for $t > \max\{2 z_0, 0\}$
\begin{eqnarray*}
\ppp (t) &:=& \frac{1}{|\EE X|} \int_{h(t)}^{t-h(t)} \overline{F}(t-s) \cdot \overline{F}(s) \ \dd s \\
\qqq (t) &:=& \frac{\overline{F}(t)}{|\EE X|} \int_{h(t)}^\infty \left[ 2\overline{F}(s)-F(-s) \right] \ \dd s \\
\eo(t)&:=& \frac{1}{|\EE X|} \left[ \left( \int_0^{h(t)} + \int_{z_0}^{h(t)} \right) \left[ \overline{F}(t-s)-\overline{F}(t) \right] \cdot \overline{F}(s) \ \dd s \right. \nonumber \\
&& \qquad\qquad\qquad \left. +\int_{-h(t)}^0 \left[ \overline{F}(t) - \overline{F}(t-s) \right] \cdot F(s) \ \dd s \right] \\
\et(t) &:=& \frac{1}{|\EE X|} \int_{-\infty}^{-h(t)} \overline{F}(t-s) \cdot F(s) \ \dd s .
\end{eqnarray*}
\end{lemma}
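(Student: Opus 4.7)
The plan is to expand both $\PP(X+Z>t)$ and $\PP(Z>t)$ as integrals of $\overline{F}(t-\cdot)$ against explicit densities or indicators, subtract them, and then carefully split and regroup the resulting integrand at the thresholds $z_0$, $\pm h(t)$, and $t-h(t)$, so that the four terms $\ppp, \qqq, \eo, \et$ emerge naturally from the bookkeeping.

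First, since $Z$ has density $\overline{F}(s)/|\EE X|$ on $(z_0,\infty)$, one can write
\begin{eqnarray*}
\PP(X+Z>t) \;=\; \frac{1}{|\EE X|} \int_{z_0}^\infty \overline{F}(t-s)\, \overline{F}(s)\, \dd s.
\end{eqnarray*}
For $\PP(Z>t) = \frac{1}{|\EE X|}\int_t^\infty \overline{F}(s)\, \dd s$, the substitution $u=t-s$ gives $\frac{1}{|\EE X|}\int_{-\infty}^0 \overline{F}(t-u)\, \dd u$, into which we insert the identity $1=\overline{F}(u)+F(u)$ to split it into two pieces. Next, I would split the convolution integral for $\PP(X+Z>t)$ at $h(t)$ and $t-h(t)$: the middle piece is exactly $\ppp(t)\,|\EE X|$; on the piece over $(z_0,h(t))$, adding and subtracting $\overline{F}(t)$ generates one of the integrals in $\eo$; on the piece over $(t-h(t),\infty)$, the substitution $u=t-s$ and a split at $0$ produce, after another add-and-subtract of $\overline{F}(t)$, the second $\eo$ integral, while the leftover $\int_{-\infty}^0 \overline{F}(u)\overline{F}(t-u)\, \dd u$ cancels exactly with the corresponding contribution from $\PP(Z>t)$. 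Finally, splitting the remaining $\int_{-\infty}^0 \overline{F}(t-u) F(u)\, \dd u$ from $\PP(Z>t)$ at $-h(t)$ yields $-\et(t)\,|\EE X|$ from the far piece, while the near piece, after one more add-and-subtract of $\overline{F}(t)$, gives the third $\eo$ integral plus an $\overline{F}(t)$-proportional boundary term.

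After this regrouping, everything beyond $\ppp(t)\,|\EE X|$, $\eo(t)\,|\EE X|$, and $-\et(t)\,|\EE X|$ is a factor $\overline{F}(t)$ times a combination of boundary integrals, so the lemma reduces to the algebraic identity
\begin{eqnarray*}
\int_{z_0}^{h(t)} \overline{F}(s)\, \dd s + \int_0^{h(t)} \overline{F}(s)\, \dd s - \int_{-h(t)}^0 F(s)\, \dd s \;=\; -\int_{h(t)}^\infty \bigl[2\overline{F}(s) - F(-s)\bigr]\, \dd s.
\end{eqnarray*}
This follows by rewriting each integral on the left as the corresponding global constant ($|\EE X|$, $\EE X^+$, or $\EE X^-$) minus a tail starting at $h(t)$, and then invoking $\int_{z_0}^\infty \overline{F}(s)\, \dd s = |\EE X|$ (the defining property of $z_0$) together with $|\EE X| = \EE X^- - \EE X^+$ (since $\EE X<0$); the constants cancel and the tails collapse to the right-hand side. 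I expect the main obstacle to be purely bookkeeping: keeping track of all signs and integration limits through the four splits and substitutions, especially because the intervals $(z_0,h(t))$, $(0,h(t))$ and $(-h(t),0)$ can overlap when $z_0<0$. Once the pieces are correctly laid out, the verification is routine algebra.
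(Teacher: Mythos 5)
Your proposal is correct and arrives at the same four-term decomposition through essentially the same splitting-and-regrouping argument; the only difference is cosmetic. The paper begins from the identity $\PP(X+Z>t)-\PP(Z>t)=\PP(X+Z>t,\,Z\leq t)-\PP(X+Z\leq t,\,Z>t)$, which immediately restricts the convolution integral to $(z_0,t)$ and introduces the $F(t-s)$ factor for the second term, whereas you expand $\PP(X+Z>t)$ over all of $(z_0,\infty)$ and insert $1=\overline F+F$ into $\PP(Z>t)$, requiring an explicit cancellation of the $\int_{-\infty}^0 \overline F(u)\overline F(t-u)\,\dd u$ pieces; after that step the two bookkeepings coincide, and your closing algebraic identity is exactly the (sign-flipped) identity the paper verifies using $\int_{z_0}^\infty\overline F=|\EE X|$ and $|\EE X|=\EE X^- -\EE X^+$.
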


\begin{proof}
First note that
$$\PP(X+Z>t)-\PP(Z>t) = \PP(X+Z>t, Z \leq t)-\PP(X+Z \leq t, Z>t).$$
For $t$ satisfying $\max\{z_0,0\} < h(t) < t/2$ we decompose the first term on the right-hand side as follows:
\begin{eqnarray*}
\lefteqn{\PP(X+Z>t, Z \leq t) = \int_{z_0}^{t} \overline{F}(t-s) \cdot \frac{1}{|\EE X|} \overline{F}(s) \ \dd s } \\
&=& \frac{1}{|\EE X|} \left[ \int_{z_0}^{h(t)} \overline{F}(t) \overline{F}(s) \ \dd s + \int_{z_0}^{h(t)} \left[ \overline{F}(t-s)-\overline{F}(t)\right] \overline{F}(s) \ \dd s \right. \\
&& \qquad + \int_{h(t)}^{t-h(t)} \overline{F}(t-s) \overline{F}(s) \ \dd s \\
&& \qquad \left. + \int_{0}^{h(t)} \overline{F}(t) \overline{F}(s) \ \dd s + \int_{0}^{h(t)} \left[\overline{F}(t-s)-\overline{F}(t)\right] \overline{F}(s) \ \dd s \right] .
\end{eqnarray*}
A similar decomposition follows for the second term:
\begin{eqnarray*}
\lefteqn{ \PP(X+Z \leq t, Z > t) = \int_{t}^{\infty} F( t-s) \cdot \frac{1}{|\EE X|} \overline{F}(s) \ \dd s }\\
&=& \frac{1}{|\EE X|} \left[ \int_{-\infty}^{-h(t)} F(s) \overline{F}(t-s) \ \dd s + \int_{-h(t)}^{0} F(s) \overline{F}(t) \ \dd s \right. \\
&& \qquad \left. + \int_{-h(t)}^{0} F(s) \left[ \overline{F}(t-s)-\overline{F}(t)\right] \ \dd s \right].
\end{eqnarray*}
Subtracting both terms we obtain
\begin{eqnarray*}
\lefteqn{ |\EE X| \cdot \left[ \PP(X+Z>t)-\PP(Z>t) \right] = } \\
&=& \int_{h(t)}^{t-h(t)} \overline{F}(t-s) \overline{F}(s) \ \dd s \\
&& - \overline{F}(t) \left[  \int_{-h(t)}^{0} F(s) \ \dd s - \int_{z_0}^{h(t)} \overline{F}(s) \ \dd s - \int_{0}^{h(t)} \overline{F}(s) \ \dd s \right] \\
&& + \left[ \left( \int_{z_0}^{h(t)} + \int_{0}^{h(t)} \right) \left[ \overline{F}(t-s)-\overline{F}(t) \right] \overline{F}(s) \ \dd s \right. \\
&& \left. - \int_{-h(t)}^{0} \left[ \overline{F}(t-s)-\overline{F}(t)\right] F(s) \ \dd s \right] - \int_{-\infty}^{-h(t)} F(s) \overline{F}(t-s) \ \dd s \\
&=& |\EE X| \cdot \left[ \ppp (t) - \qqq (t)+ \eo(t) - \et(t) \right].
\end{eqnarray*}
The last equality comes from using the definition of $\ppp$, $\qqq$, $\eo$ and $\et$, and noting that $|\EE X| = \int_{z_0}^\infty \overline{F}(s) \ \dd s$ and $\EE X < 0$, so we have that
\begin{eqnarray*}
\lefteqn{\int_{-h(t)}^{0} F(s) \ \dd s - \int_{z_0}^{h(t)} \overline{F}(s) \ \dd s - \int_{0}^{h(t)} \overline{F}(s) \ \dd s} \\
&& = \EE X^- - |\EE X| - \EE X^+ + 2\int_{h(t)}^\infty \overline{F}(s) \ \dd s - \int_{-\infty}^{-h(t)} F(s) \ \dd s \\
&& = \int_{h(t)}^\infty \left[ 2 \overline{F}(s)- F(-s)  \right] \ \dd s .
\end{eqnarray*}
This concludes the proof.
\end{proof}

The next step consists in determining which terms dominate when $t \to \infty$; this is done in the following result.

\begin{lemma}\label{lemma:asymptotics}
Let $X$ be a random variable with negative mean satisfying Assumptions~(A1)--(A3) for some index of regular variation $\alpha \in (1, 2)$.
Let $Z$ be independent of $X$ with the residual life distribution of $X$.
In the definition of $p$, $q$, $\eo$ and $\et$ consider a function $h$ satisfying Assumption~(A2); in particular $h$ satisfies $\max\{z_0, 0 \} < h(t) < t/2$ for all $t > \max\{2 z_0, 0\}$, and it holds that $h(t) \to \infty$ and $h(t) = o \left( t \right)$ as $t \to \infty$.
Then the following hold as $t \to \infty$:
\begin{enumerate}
\item $$\ppp(t)-\qqq(t) \sim \frac{t \overline F(t)^2}{|\EE X|} \left( 2\int_0^{1/2}[(1-u)^{-\alpha}-1] u^{-\alpha}\ \dd u - \frac{2^{\alpha}}{\alpha-1} \right),$$
\item $$\eo(t) = o \left( t \overline F(t)^2 \right) \qquad \text{and} \qquad \et(t) = o \left( t \overline F(t)^2 \right).$$
\end{enumerate}
\end{lemma}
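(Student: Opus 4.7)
The plan is to prove (i) by rewriting $\ppp(t)-\qqq(t)$ as two dominant contributions of order $t\overline{F}(t)^2$ isolated via symmetry and Karamata's theorem, and to prove (ii) by dominating each piece of $\eo$ and $\et$ using the regular variation of the density in~(A3) together with the integrated left-tail bound in~(A2).

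For part~(i), first I would use the symmetry $s\mapsto t-s$ to write $\ppp(t) = \frac{2}{|\EE X|}\int_{h(t)}^{t/2}\overline{F}(t-s)\overline{F}(s)\,\dd s$, and observe that $\qqq(t) = \frac{2\overline{F}(t)}{|\EE X|}\int_{h(t)}^\infty\overline{F}(s)\,\dd s + o(t\overline{F}(t)^2)$ because $\int_{h(t)}^\infty F(-s)\,\dd s = \int_{h(t)}^\infty\PP(X^->s)\,\dd s = o(t\overline{F}(t))$ directly by~(A2). Splitting the $\qqq$-integral at $t/2$ and rearranging gives
\begin{equation*}
\ppp(t)-\qqq(t) = \frac{2}{|\EE X|}\int_{h(t)}^{t/2}[\overline{F}(t-s)-\overline{F}(t)]\,\overline{F}(s)\,\dd s - \frac{2\overline{F}(t)}{|\EE X|}\int_{t/2}^\infty\overline{F}(s)\,\dd s + o(t\overline{F}(t)^2).
\end{equation*}
Karamata combined with $\overline{F}(t/2)\sim 2^\alpha\overline{F}(t)$ yields $\int_{t/2}^\infty\overline{F}(s)\,\dd s\sim\frac{2^{\alpha-1}t\overline{F}(t)}{\alpha-1}$, producing the $-2^\alpha/(\alpha-1)$ contribution. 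For the ``cross'' integral I would substitute $u=s/t$ and divide by $\overline{F}(t)^2$; the normalized integrand converges pointwise to $[(1-u)^{-\alpha}-1]u^{-\alpha}$ by regular variation, and near $u=0$ this behaves like $\alpha u^{1-\alpha}$ which is integrable precisely because $\alpha<2$. Dominated convergence against a suitable Potter-type majorant then produces the factor $2\int_0^{1/2}[(1-u)^{-\alpha}-1]u^{-\alpha}\,\dd u$.

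For part~(ii), the bound on $\et(t)$ is immediate: $s<-h(t)<0$ yields $t-s>t$, so monotonicity gives $\overline{F}(t-s)\leq\overline{F}(t)$, hence $\et(t)\leq\frac{\overline{F}(t)}{|\EE X|}\int_{h(t)}^\infty\PP(X^->u)\,\dd u=o(t\overline{F}(t)^2)$ by~(A2). For $\eo(t)$, by~(A3) the density $f$ is regularly varying of index $-(\alpha+1)$, so $f(t+\xi)\sim f(t)\sim\alpha\overline{F}(t)/t$ uniformly over $|\xi|\leq h(t)$; the mean value theorem then gives $|\overline{F}(t-s)-\overline{F}(t)|\leq(1+o(1))|s|\,\alpha\overline{F}(t)/t$ uniformly for $|s|\leq h(t)$. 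The $\overline{F}(s)$-weighted pieces are bounded by $C\,\overline{F}(t)/t\cdot\int_{z_0}^{h(t)}s\,\overline{F}(s)\,\dd s\sim C'\,h(t)^2\overline{F}(h(t))\overline{F}(t)/t$ (Karamata, using $\alpha<2$), and $(h(t)/t)^{2-\alpha}\to 0$ upgrades this to $o(t\overline{F}(t)^2)$. The $F(s)$-weighted piece over $[-h(t),0]$ reduces to bounding $\int_0^{h(t)}u\,\PP(X^-\geq u)\,\dd u$, which I would control by integration by parts against the tail $\int_u^\infty\PP(X^->s)\,\dd s$ and then invoke~(A2).

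The delicate step is the dominated-convergence argument in part~(i): the pointwise limit is singular at $u=0$, integrable only because $\alpha<2$. Producing a uniform-in-$t$ dominating function requires Potter bounds for $\overline{F}(tu)/\overline{F}(t)$ on $[h(t)/t,1]$ together with a matching bound on $[\overline{F}(t(1-u))-\overline{F}(t)]/\overline{F}(t)$ via~(A3) that correctly captures its linear-in-$u$ behavior for small $u$, rather than the naive $(1-u)^{-\alpha}-1$. Executing this majorization carefully is the technical heart of the proof.
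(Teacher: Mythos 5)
Your strategy for part~(i) matches the paper's: fold $\ppp$ around $s=t/2$, split $\qqq$ at $t/2$, discard the $\int_{h(t)}^\infty F(-s)\,\dd s$ contribution via~(A2), and compute $\int_{t/2}^\infty\overline F(s)\,\dd s$ by Karamata. For the remaining cross integral, the paper does not use dominated convergence directly; instead it cuts the normalized integral at a fixed $\delta\in(0,1/2)$, invokes the Uniform Convergence Theorem on $[\delta,1/2]$, bounds the $[h(t)/t,\delta]$ portion by $C\delta^{2-\alpha}$ (mean value theorem via~(A3) plus Karamata), and sends $\delta\downarrow 0$ after $t\to\infty$. Your dominated-convergence route with a Potter majorant $\overline F(tu)/\overline F(t)\le Au^{-\alpha-\epsilon}$ and the same (A3)-driven linear-in-$u$ bound yields the dominating function $C u^{1-\alpha-\epsilon}$, integrable on $(0,1/2]$ once $\epsilon<2-\alpha$; this is a valid alternative that relies on the identical input from~(A3), namely the regularly varying density. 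Your treatments of $\et$ and of the two $\overline F(s)$-weighted pieces of $\eo$ also coincide with the paper's (the $[z_0,0]$ correction is handled by the same MVT argument).

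The one genuine gap is in your handling of the $F(s)$-weighted piece $\int_{-h(t)}^0[\overline F(t)-\overline F(t-s)]F(s)\,\dd s$. After the MVT reduction you need $\int_0^{h(t)}u\,\PP(X^-\ge u)\,\dd u = o(t^2\overline F(t))$. Integration by parts against $G(u):=\int_u^\infty\PP(X^->s)\,\dd s$ gives $\int_0^{h(t)}u\,\PP(X^->u)\,\dd u=\int_0^{h(t)}G(u)\,\dd u - h(t)G(h(t))$, and the only a priori control on the first term is $h(t)G(0)=h(t)\,\EE X^-=o(t)$. But for $\alpha>1$ one has $t\overline F(t)\to 0$, so $o(t)$ is \emph{not} $o(t^2\overline F(t))$; the integration-by-parts step merely reproduces the original integral and (A2) alone does not close the bound. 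The estimate does hold when $\EE[(X^-)^2]<\infty$, since then the integral is $O(1)=o(t^2\overline F(t))$ for $\alpha<2$; for heavier left tails one must use the actual decay rate of $\PP(X^->u)$ together with a further restriction on $h$ to land in the window between the growth forced on $h$ by~(A2) and the decay forced by this piece, the existence of which uses $\beta>\alpha$. (The paper's own proof also does not explicitly treat this third piece of $\eo$, but your proposed mechanism for it does not work as stated.)
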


We remark that Lemmas~\ref{lemma:decomposition} and \ref{lemma:asymptotics} together establish that if $X$ satisfies Assumptions~(A1)--(A3) and $\alpha \in (1,2)$ then
\begin{eqnarray}\label{sim:prop 3 BG}
\PP(X+Z>t) - \PP(Z>t) &\sim& K_\alpha \PP(Z>t) \PP(X>t)
\end{eqnarray}
as $t \to \infty$, where $$K_\alpha := (\alpha-1) \int_0^1 \left( (1-u)^{-\alpha}-1 \right) \left( u^{-\alpha}-1 \right) \dd u - (\alpha+1).$$
This comes from $\PP(Z>t) \sim t \overline F(t) / ((\alpha-1)|\EE X|)$ by Karamata's Theorem~\cite[Theorem 1.6.1]{bingham1989regular}.
We note that, in contrast, Proposition~3 of~\cite{blanchet2008efficient} shows that $\PP(X+Z>t) - \PP(Z>t) = o \left( \PP(X>t) \right)$, so the result~\eqref{sim:prop 3 BG} is much finer.

\begin{proof}
We start proving (i).
For that, first rewrite $|\EE X| \cdot (p(t)-q(t)) / (t \overline F(t)^2)$ as
$$2\int_{h(t)}^{t/2} \frac{\overline F(t-s)-\overline F(t)}{\overline F(t)} \cdot \frac{\overline F(s)}{t \overline F(t)} \ \dd s - \frac{2 \int_{t/2}^\infty \overline F(s) \ \dd s}{t \overline F(t)} + \frac{\int_{h(t)}^\infty F(-s) \ \dd s}{t \overline F(t)}.$$
The third term goes to zero by Assumption~(A2), so we can ignore it for the proof of the statement.
For the second term, note that since $\alpha>1$, Karamata's Theorem~\cite[Theorem 1.6.1]{bingham1989regular} yields
$$2\int_{t/2}^\infty \overline F(s) \ \dd s \sim \frac{t \overline F(t/2)}{\alpha-1} \sim \frac{2^\alpha}{\alpha-1} t\overline F(t).$$
It remains to investigate the first term.
To this end, we first rewrite the integral as
\begin{eqnarray}\label{eq:integral i}
\int_{h(t)/t}^{1/2} \left[\frac{\overline F(t (1-u))}{\overline F(t)} - 1\right] \frac{\overline F(t u)}{\overline F(t)} \ \dd u;
\end{eqnarray}
we need to show that as $t \to \infty$ this integral converges to $\int_0^{1/2} [(1-u)^{-\alpha}-1] u^{-\alpha}\ \dd u$.
To this end, consider $\delta \in (0, 1/2)$ and note that since $h(t)=o(t)$ we can write~\eqref{eq:integral i} for all sufficiently large $t$ as
\begin{eqnarray}\label{eq:integral ii}
\int_{h(t)/t}^{\delta} \left[\frac{\overline F(t (1-u))}{\overline F(t)} - 1\right] \frac{\overline F(t u)}{\overline F(t)} \ \dd u
+
\int_{\delta}^{1/2} \left[\frac{\overline F(t (1-u))}{\overline F(t)} - 1\right] \frac{\overline F(t u)}{\overline F(t)} \ \dd u.
\end{eqnarray}
We start by analyzing the second term in~\eqref{eq:integral ii}.
Since $\overline F$ is regularly varying with tail index $\alpha$ we get by the Uniform Convergence Theorem~\cite[Theorem 1.2.1]{bingham1989regular} that
$$\sup_{u \in [\delta, \, 1/2]} \left\lvert \left[\frac{\overline F(t (1-u))}{\overline F(t)} - 1\right] \frac{\overline F(t u)}{\overline F(t)} - [(1-u)^{-\alpha}-1] u^{-\alpha} \right\rvert \to 0 \quad \text{as } t \to \infty,$$
so
\begin{eqnarray}\label{lim lemma}
\int_{\delta}^{1/2} \left[\frac{\overline F(t (1-u))}{\overline F(t)} - 1\right] \frac{\overline F(t u)}{\overline F(t)} \ \dd u \to \int_\delta^{1/2} [(1-u)^{-\alpha}-1] u^{-\alpha}\ \dd u,
\end{eqnarray}
as $t \to \infty$.
We next analyze the first term in~\eqref{eq:integral ii}.
Using Assumption~(A3) we apply the mean value theorem on the interval $(0, u)$ to the function $s \mapsto \overline F(t (1-s))$ and establish  that $\overline F(t (1-u))/\overline F(t) -1 = f \left(t(1-\xi)\right) t u / \overline F(t)$ for some $\xi=\xi(t, u) \in (0, u)$, where $f$ is the density of $X$.
Additionally, we have that for all sufficiently large $t$ it holds that $f \left(t(1-\xi)\right) t/ \overline F(t) \leq 2(1+2^{\alpha+1})\alpha$ for all $\xi \in (0,\delta)$.
Indeed, since $f$ is regularly varying with tail index $\alpha+1$  then by Karamata's Theorem~\cite[Theorem 1.6.1]{bingham1989regular} we get $t f(t)/\overline F(t) \to \alpha$; additionally, by Uniform Convergence Theorem~\cite[Theorem 1.2.1]{bingham1989regular} we have that for any large enough $t$
$$\sup_{\xi \in (0, \delta)} \left\lvert \frac{f(t(1-\xi))}{f(t)} - \frac{1}{(1-\xi)^{\alpha+1}} \right\rvert \leq 1,$$
so $f(t(1-\xi)) / f(t) \leq 1+1/(1-\xi)^{\alpha+1} \leq 1+2^{\alpha+1}$ for all sufficiently large $t$ and for all $\xi \in (0, \delta) \subset (0, 1/2)$.
We conclude that $\overline F(t (1-u))/\overline F(t) -1 \leq 2(1+2^{\alpha+1})\alpha \guidonote{u}$ for all large enough $t$.
We use this inequality to bound the term in the brackets of the first term of~\eqref{eq:integral ii}, obtaining that for all sufficiently large $t$
$$\int_{h(t)/t}^{\delta} \left[\frac{\overline F(t (1-u))}{\overline F(t)} - 1\right] \frac{\overline F(t u)}{\overline F(t)} \ \dd u \leq 2(1+2^{\alpha+1})\alpha \int_{h(t)/t}^{\delta} \frac{t u \overline F(t u)}{t \overline F(t)} \ \dd u.$$
We now argue that $\int_{h(t)/t}^{\delta} \left( t u \overline F(t u) \right) / \left( t \overline F(t) \right) \ \dd u \leq 2\delta^{2-\alpha}/(2-\alpha)$ for all sufficiently large $t$.
Indeed,
$$\int_{h(t)/t}^{\delta} \frac{ tu \overline F(t u) }{ t \overline F(t) } \ \dd u = \left(\int_0^{\delta t} - \int_0^{h(t)}\right) \frac{ s \overline F(s) }{ t^2 \overline F(t) } \ \dd s,$$
so using that $t \overline F(t)$ is regularly varying with tail index $\alpha-1 \in (0, 1)$ we can apply Karamata's Theorem~\cite[Theorem 1.6.1]{bingham1989regular} and that $h(t) \to \infty$ to get
$$\int_{h(t)/t}^{\delta} \frac{tu \overline F(t u)}{t \overline F(t)} \ \dd u \sim \frac{1}{2-\alpha} \left( \delta^2 \frac{\overline F(\delta t)}{\overline F(t)} - \left(\frac{h(t)}{t}\right)^2 \frac{\overline F(h(t))}{\overline F(t)} \right) \sim \frac{\delta^{2-\alpha}}{2-\alpha}$$
as $t\to\infty$.
Indeed, note that since $h(t)=o(t)$ and $s \mapsto s^2 \overline F(s)$ is regularly varying with tail index $\alpha-2 \in (-1,0)$ then $\left(h(t) / t \right)^2 (\overline F(h(t)) / \overline F(t)) \to 0$.
All in all, we obtain that the first term of~\eqref{eq:integral ii} satisfies, for all large enough $t$,
$$\int_{h(t)/t}^{\delta} \left[\frac{\overline F(t (1-u))}{\overline F(t)} - 1\right] \frac{\overline F(t u)}{\overline F(t)} \ \dd u \leq \frac{4(1+2^{\alpha+1})\alpha}{2-\alpha} \delta^{2-\alpha}.$$
Lastly, note that $\delta \in (0, 1/2)$ is arbitrary, so letting $\delta$ decrease to 0 in the latter inequality we get that 
\begin{eqnarray}\label{eq:limit i}
\lim_{\delta \searrow 0} \ \limsup_{t \to \infty} \ \int_{h(t)/t}^{\delta} \left[\frac{\overline F(t (1-u))}{\overline F(t)} - 1\right] \frac{\overline F(t u)}{\overline F(t)} \ \dd u = 0.
\end{eqnarray}
Similarly, letting $\delta$ decrease to 0 in~\eqref{lim lemma} we obtain
\begin{eqnarray}\label{eq:limit ii}
\lim_{\delta \searrow 0} \, \lim_{t \to \infty} \int_{\delta}^{1/2} \left[\frac{\overline F(t (1-u))}{\overline F(t)} - 1\right] \frac{\overline F(t u)}{\overline F(t)} \, \dd u = \int_0^{1/2} [(1-u)^{-\alpha}-1] u^{-\alpha}\, \dd u.
\end{eqnarray}
From~\eqref{eq:limit i} and~\eqref{eq:limit ii} and the decomposition~\eqref{eq:integral ii} of~\eqref{eq:integral i} we get the desired result.


We now prove (ii).
First we show that $\eo(t) = o \left( t \overline F(t)^2 \right)$.
To this end, it is sufficient to prove
$\int_0^{h(t)} \left[ \overline{F}(t-s)-\overline{F}(t) \right] \overline{F}(s) \ \dd s = o \left( t \overline F(t)^2 \right)$.
\guidonote{Indeed, if $z_0 < 0$ it holds that $\int_{z_0}^{h(t)} \left[ \overline{F}(t-s)-\overline{F}(t) \right] \overline{F}(s) \ \dd s = o \left( t \overline F(t)^2 \right)$ as $t \to \infty$, since
\begin{eqnarray*}
\left| \int_{z_0}^0 \left[ \overline{F}(t-s)-\overline{F}(t) \right] \overline{F}(s) \ \dd s \right|
\leq \int_{0}^{|z_0|} \left[ \overline{F}(t) - \overline{F}(t+s) \right] \ \dd s
\leq |z_0| \sup_{s \in [t, t+|z_0|]} f(s) \sim |z_0| f(t)
\end{eqnarray*}
by the Uniform Convergence Theorem~\cite[Chapter 2]{foss2011introduction} and because $f$ is in particular long tailed by Assumption~(A3); and since $\alpha \in (1,2)$ then $f(t) = o \left( t \overline F(t)^2 \right)$.
}
\guidonote{It is sufficient then} to prove that the expression
$$\int_0^{h(t)} \frac{\overline{F}(t-s)-\overline{F}(t)}{\overline F(t)} \frac{\overline{F}(s)}{t \overline F (t)} \ \dd s = \int_0^{h(t)/t} \left[ \frac{\overline{F}(t(1-u))}{\overline F(t)}-1 \right] \frac{\overline{F}(t u)}{\overline F (t)} \ \dd u$$
goes to zero as $t \to \infty$.
We proceed by using the same line of reasoning used to prove~\eqref{eq:limit i}, which is delineated in the following.
First, apply the mean value theorem on the interval $(0,u)$ to the function $s \mapsto \overline F(t (1-s))$, and then use the Uniform Convergence Theorem~\cite[Theorem 1.2.1]{bingham1989regular} to get that for all sufficiently large $t$ we have
\begin{eqnarray}\label{eq:limit i'}
\int_0^{h(t)/t} \left[\frac{\overline F(t (1-u))}{\overline F(t)} - 1\right] \frac{\overline F(t u)}{\overline F(t)} \ \dd u \leq 2(1+2^{\alpha+1})\alpha \int_0^{h(t)/t} \frac{t u \overline F(t u)}{t \overline F(t)} \ \dd u.
\end{eqnarray}
Second, apply Karamata's Theorem~\cite[Theorem 1.6.1]{bingham1989regular} to obtain that
\begin{eqnarray}\label{eq:limit ii'}
\int_0^{h(t)/t} \frac{t u \overline F(t u)}{t \overline F(t)} \ \dd u = \int_0^{h(t)} \frac{s \overline F(s)}{t^2 \overline F(t)} \ \dd s \sim \frac{1}{2-\alpha} \left( \frac{h(t)}{t} \right)^2 \frac{\overline F(h(t))}{\overline F(t)},
\end{eqnarray}
since $h(t) \to \infty$.
Third, since the function $s \mapsto s^2 \overline F(s)$ is regularly varying with $2-\alpha \in (0,1)$ and $h(t)=o(t)$ then $\left( h(t)/t \right)^2 \overline F(h(t)) / \overline F(t) \to 0$ when $t \to \infty$.
The latter fact, together with~\eqref{eq:limit i'} and~\eqref{eq:limit ii'}, allows to conclude the desired result.

Lastly, $\et(t) = o \left( t \overline F(t)^2 \right)$ also holds because
$$\int_{-\infty}^{-h(t)} \frac{\overline{F}(t-s)}{\overline F(t)} \frac{F(s)}{t \overline F(t)} \ \dd s = \int_{h(t)}^\infty \frac{\overline{F}(t+s)}{\overline F(t)} \frac{F(-s)}{t \overline F(t)} \ \dd s \leq \int_{h(t)}^\infty \frac{F(-s)}{t \overline F(t)} \ \dd s,$$
with the last term going to zero as $t \to \infty$ by Assumption~(A2).
\end{proof}


The previous result shows that when $t \to \infty$ the sign of the difference $\PP(X+Z>t)-\PP(Z>t)$ reduces to the sign of the term $\ppp (t) - \qqq (t)$.
We now show that the latter is fully determined by the tail index $\alpha$ being either in $(1,\, 3/2)$ or in $(3/2,\, 2)$.


\begin{lemma}\label{lemma:asymptotics sign}
The quantity
\begin{eqnarray}\label{int:sign}
\int_0^{1/2}[(1-u)^{-\alpha}-1] u^{-\alpha}\ \dd u - \frac{2^{\alpha-1}}{\alpha-1}
\end{eqnarray}
is negative for $\alpha\in(1, \, 3/2)$ and positive for $\alpha\in(3/2, \, 2)$.
\end{lemma}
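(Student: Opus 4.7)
The plan is to rewrite the expression in~\eqref{int:sign}, which I denote $I(\alpha)$, in a form where the dichotomy becomes transparent. Using the identity $(1-u)^{-\alpha} - 1 = \alpha \int_0^u (1-s)^{-\alpha-1}\, \dd s$ and exchanging the order of integration in the resulting double integral, together with the elementary primitives $\int_s^{1/2} u^{-\alpha}\, \dd u = (s^{1-\alpha} - 2^{\alpha-1})/(\alpha-1)$ and $\int_0^{1/2}(1-s)^{-\alpha-1}\, \dd s = (2^\alpha - 1)/\alpha$, one obtains after simplification
\[ I(\alpha) \; = \; \frac{1}{\alpha-1}\left[ \, \alpha \int_0^{1/2}(1-s)^{-\alpha-1} s^{1-\alpha}\, \dd s \, - \, 2^{2\alpha-1} \, \right]. \]
The substitution $s = (1-t)/2$, which maps $[0, 1/2]$ onto $[0, 1]$, then converts the remaining integral into $2^{2\alpha-1} L(\alpha)$ where $L(\alpha) := \int_0^1 (1-t)^{1-\alpha}(1+t)^{-\alpha-1}\, \dd t$. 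This yields the key representation
\[ I(\alpha) \; = \; \frac{2^{2\alpha-1}}{\alpha-1} \bigl[ \alpha L(\alpha) - 1 \bigr]. \]
Since the prefactor is strictly positive on $\alpha \in (1,2)$, the sign of $I(\alpha)$ matches that of $H(\alpha) := \alpha L(\alpha) - 1$.

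\textbf{Vanishing at $3/2$ and monotonicity of $H$.} To show $H(3/2) = 0$, I would apply the substitution $t = \cos(2\theta)$, so that $1-t = 2\sin^2\theta$, $1+t = 2\cos^2\theta$, and $\dd t = -4 \sin\theta\cos\theta\, \dd\theta$, to reduce $L(3/2)$ to $\tfrac{1}{2}\int_0^{\pi/4} \sec^4\theta\, \dd\theta$; this evaluates to $2/3$ via the elementary primitive $\int \sec^4\theta\, \dd\theta = \tan\theta + \tfrac{1}{3}\tan^3\theta$, giving $H(3/2) = \tfrac{3}{2}\cdot\tfrac{2}{3} - 1 = 0$. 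For strict monotonicity, differentiation under the integral (routinely justified on compact subsets of $(1,2)$) yields
\[ H'(\alpha) \; = \; L(\alpha) + \alpha L'(\alpha) \; = \; \int_0^1 (1-t)^{1-\alpha}(1+t)^{-\alpha-1}\bigl[1 - \alpha \ln(1-t^2)\bigr]\, \dd t. \]
For $t \in (0,1)$ we have $\ln(1-t^2) < 0$ and $\alpha > 0$, so the bracket exceeds $1$ and the integrand is strictly positive; therefore $H'(\alpha) > 0$ on $(1,2)$.

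\textbf{Conclusion and expected difficulty.} The two facts above imply $H < 0$ on $(1, 3/2)$ and $H > 0$ on $(3/2, 2)$, and the dichotomy for $I(\alpha) = \frac{2^{2\alpha-1}}{\alpha-1}H(\alpha)$ then follows from sign preservation. The main obstacle I expect is the first step: uncovering the right transformation. A naive expansion of~\eqref{int:sign} does not expose monotonicity in $\alpha$, and neither of the ``pieces'' $\int_0^{1/2}(1-u)^{-\alpha} u^{-\alpha}\, \dd u$ and $\int_0^{1/2} u^{-\alpha}\, \dd u$ is finite for $\alpha \geq 1$, so one cannot split. The Fubini step followed by the symmetrizing change of variables $s = (1-t)/2$ is what places $I(\alpha)$ in a form where the $\alpha$-derivative has a manifest sign; once $L$ is in hand, the evaluation at $\alpha=3/2$ and the derivative bound are essentially mechanical.
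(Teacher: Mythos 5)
Your proof is correct, but it takes a genuinely different and considerably more elaborate route than the paper's. The paper does \emph{not} attempt the naive split of $\int_0^{1/2}(1-u)^{-\alpha}u^{-\alpha}\,\dd u$ and $\int_0^{1/2}u^{-\alpha}\,\dd u$ (which, as you note, diverge separately); instead it observes that for each fixed $u\in(0,1/2)$ the \emph{combined} integrand $[(1-u)^{-\alpha}-1]u^{-\alpha}$ is a product of two positive factors, each strictly increasing in $\alpha$ (since $u<1$ and $1-u<1$), and hence the full integral $\int_0^{1/2}[(1-u)^{-\alpha}-1]u^{-\alpha}\,\dd u$ is strictly increasing in $\alpha$. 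The subtracted term $-2^{\alpha-1}/(\alpha-1)$ is also increasing on $(1,2)$, because $\beta\mapsto 2^\beta/\beta$ is strictly decreasing on $(0,1)$ (its derivative has the sign of $\beta\ln 2-1<0$). The sum of two increasing functions is increasing, and the expression vanishes at $\alpha=3/2$, which gives the dichotomy directly --- no Fubini, no change of variables, and no differentiation under the integral sign is needed. Your transformation to $I(\alpha)=\frac{2^{2\alpha-1}}{\alpha-1}\bigl[\alpha L(\alpha)-1\bigr]$ is a nice identity and the derivative argument for $H(\alpha)=\alpha L(\alpha)-1$ is valid (the bracket $1-\alpha\ln(1-t^2)>1$ is clean), but it repackages a fact that can be read off pointwise from the original integrand. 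Where your approach adds value is the explicit evaluation at $\alpha=3/2$: the paper simply asserts ``it is easy to verify,'' whereas your substitution $t=\cos 2\theta$ reducing $L(3/2)$ to $\tfrac12\int_0^{\pi/4}\sec^4\theta\,\dd\theta=\tfrac23$ spells out that verification cleanly (one can alternatively verify it directly from~\eqref{int:sign} using the antiderivative $\frac{\dd}{\dd u}\bigl[2(2u-1)(u(1-u))^{-1/2}\bigr]=(u(1-u))^{-3/2}$ with a limiting argument at $u=0$).
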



\begin{proof}
First note that for fixed $u \in (0,1/2)$ the function $[(1-u)^{-\alpha}-1] u^{-\alpha}$ is strictly increasing in $\alpha>0$, so $\int_0^{1/2}[(1-u)^{-\alpha}-1] u^{-\alpha}\ \dd u$ is as well.
Also, since $2^\beta/\beta$ is strictly-decreasing for $\beta \in (0, 1)$ then $-2^{\alpha-1} / (\alpha-1)$ is strictly increasing in $\alpha$ when $\alpha \in (1,2)$.
Thus~\eqref{int:sign} is strictly increasing in $\alpha$ for $\alpha \in (1,2)$.
Lastly, it is easy to verify that if $\alpha=3/2$ then~\eqref{int:sign} is equal to zero.
\end{proof}


With the previous lemmas the proof of Theorem~\ref{theo:pareto case} is straightforward.

\begin{proof}[Proof of Theorem~\ref{theo:pareto case}]
Consider in the definition of $p$, $q$, $\eo$ and $\et$ a function $h$ satisfying Assumption~(A2); in particular it satisfies $\max\{z_0, 0 \} < h(t) < t/2$ for all $t > \max\{2 z_0, 0\}$, and $h(t) \to \infty$ and $h(t) = o \left( t \right)$ as $t \to \infty$.
By Lemma~\ref{lemma:decomposition} and Lemma~\ref{lemma:asymptotics} we have that as $t \to \infty$
\begin{eqnarray*}
\frac{\PP(X+Z>t)-\PP(Z>t)}{t \overline F(t)^2} = \frac{\ppp (t) - \qqq (t)}{t \overline F(t)^2} + o\left( 1 \right).
\end{eqnarray*}
We conclude from Lemma~\ref{lemma:asymptotics sign} that as $t\to\infty$ the right-hand side is negative for $\alpha\in(1, \, 3/2)$ and positive for $\alpha\in(3/2, \, 2)$.
\end{proof}

\section*{Acknowledgments}

We gratefully acknowledge support form NSF under grant CMMI-1252878.
G.L.~acknowledges support from the program ``Becas de Doctorado en el Extranjero - Becas Chile - CONICYT'' \guidonote{under grant 72110679}.
\guidonote{We thank the School of Industrial and Systems Engineering at Georgia Institute of Technology, where part of this work was carried out.}
We also thank Jos\'e Blanchet for valuable discussions.

\begin{appendices}

\section{Proof of Proposition~\ref{prop:efficiency characterization}}\label{app:eff char}

\begin{proof}[Proof of Proposition~\ref{prop:efficiency characterization}]
For (i), assume that (S2) holds; that is, that we have $\PP(X+Z>t) \leq \PP(Z>t)$ for all $t$ sufficiently large.
Take then $c$ sufficiently large so that $\PP(X+Z>c+t) \leq \PP(Z>c+t)$ holds for all $t \geq 0$.
Thus, we have by definition of $v$ and $w$ that
\begin{eqnarray}\label{eq:proof eff 1}
w (y-b-c) / v (y-b-c) \leq 1\qquad \text{for all $y \leq b$.}
\end{eqnarray}
Using then the definition~\eqref{def:Q} of $\mathsf{Q}^\bc$, the following holds for $S_0=0$ and all $b \geq 0$:
\begin{eqnarray*}
\I{\tau_b < \infty} \frac{\dd \PP}{\dd \QQ^\bc} (S_n : 0 \! \leq \! n \! \leq \! \tau_b) &=& \I{\tau_b < \infty} \frac{w (S_0 - b-c)}{v (S_{\tau_b} - b-c)} \prod_{n=1}^{\tau_b-1} \frac{w (S_n - b-c)}{v (S_n - b-c)} \\
&\leq& \I{\tau_b < \infty} \frac{w (S_0 - b-c)}{v (S_{\tau_b} - b-c)}.
\end{eqnarray*}
It follows that, conditional on $\tau_b<\infty$, we have $w (S_0 - b-c) \leq v (S_0 - b-c) \leq v (S_{\tau_b} - b-c)$, by inequality~\eqref{eq:proof eff 1} and monotonicity of $v$.
So then we obtain that $\I{\tau_b < \infty} \dd \PP/\dd \QQ^\bc (S_n : 0 \! \leq \! n \! \leq \! \tau_b) \leq 1$.
We conclude that statement~($\text{S1}^\bc$) holds, by Corollary~\ref{prop:efficiency}.


For (ii), assume that~($\text{S1}^\bc$) holds, for some $b \geq 0$ and some $c \in \R$; i.e., that $\QQ^\bc$ is \guidonote{a direct proposal} for conditional sampling from $\PP(\, \cdot \, | \tau_b < \infty)$.
We proceed by contradiction and assume that (S2) does not hold, i.e., that for all $t$ we have that there exists a $t_0>t$ such that $\PP(X+Z>t_0) > \PP(Z>t_0)$.
Using the fact that $w (y) = \PP(X+Z>-y)$ and $v (y) = \PP(Z>-y)$, we get that the previous hypothesis implies in particular that for all $y \leq b $ there exists $y_0 < y$ such that $w (y_0-b-c) / v (y_0-b-c) > 1$ holds.

With this, we will show that necessarily the following holds
\begin{eqnarray}\label{ineq:contrad 1}
\QQ^\bc \left( \I{\tau_b < \infty} \frac{w (S_0 - b-c)}{v (S_{\tau_b} - b-c)} \prod_{n=1}^{\tau_b-1} \frac{w (S_n - b-c)}{v (S_n - b-c)} >1 \right) > 0,
\end{eqnarray}
i.e., that $\QQ^\bc \left( \I{\tau_b < \infty} \cdot \dd \PP/\dd \QQ^\bc (S_n : 0 \! \leq \! n \! \leq \! \tau_b) >1 \right) > 0$.
The latter is a contradiction with hypothesis ($\text{S1}^\bc$), by Corollary~\ref{prop:efficiency}.
Now, to prove~\eqref{ineq:contrad 1} the main idea is to construct paths $(S_n : 0 \!\leq\! n \!\leq\! \tau_b)$ under $\QQ^\bc$ that, before crossing the barrier $b$, spend a sufficiently large amount of time in the set
\begin{eqnarray*}
Y_{>1}^{c,b} := \left\{ y \leq b \ : \ \frac{ w (y-b-c) }{ v (y-b-c) } > 1 \right\}.
\end{eqnarray*}
For that we distinguish two cases: if $S_0=0 \in Y_{>1}^{c,b}$ or not.
We start with the former case.

\paragraph{Case 1: if $c$ and $b \geq 0$ are such that $S_0 = 0 \in Y_{>1}^{c,b}$.}
In this case we have that for all $C>0$ there exists $N>0$ such that
$$\PP\left( \prod_{n=1}^{N-1} \frac{w (S_n - b-c)}{v (S_n - b-c)} > C \right) > 0.$$
Indeed, this comes from the fact that $\PP(|X| \leq \delta) > 0$ for all $\delta>0$ and that the function $w (\cdot-b-c)/v (\cdot-b-c)$ is continuous; hence, the random walk can stay for an arbitrary amount of steps in a small neighborhood of $S_0=0$ subset of $Y_{>1}^{c,b}$.
It follows that we have, for a sufficiently large $N>0$,
\begin{eqnarray*}\label{ineq:contrad 2}
\PP\left( \tau_b=N \ ; \ \prod_{n=1}^{N-1} \frac{w (S_n - b-c)}{v (S_n - b-c)} > \frac{v (S_{N}-b-c)}{w (S_0-b-c)} \right) > 0,
\end{eqnarray*}
since $X$ has unbounded right support and $v (\cdot-b-c) \leq 1$.
Using then absolute continuity of $\PP$ with respect to $\QQ^\bc$ over paths with finite number of steps, we get that
\begin{eqnarray}\label{ineq:contrad 3}
\QQ^\bc \left( \tau_b=N \ ; \ \prod_{n=1}^{N-1} \frac{w (S_n - b-c)}{v (S_n - b-c)} > \frac{v (S_{N}-b-c)}{w (S_0-b-c)} \right) > 0.
\end{eqnarray}
We conclude then that~\eqref{ineq:contrad 1} also holds, since the event in~\eqref{ineq:contrad 3} is subset of the event in~\eqref{ineq:contrad 1}.
This proves inequality~\eqref{ineq:contrad 1}, which is a contradiction with hypothesis ($\text{S1}^\bc$).


\paragraph{Case 2: if $c$ and $b \geq 0$ are such that $S_0 = 0 \notin Y_{>1}^{c,b}$.}
The idea for this case is to reduce it to the previous one, by constructing paths that, first, move to the set $Y_{>1}^{c,b}$, and second, spend a sufficiently large amount of time in $Y_{>1}^{c,b}$.
For that, first define $\tau_{>1} := \inf \{ n \geq 0 \, : \, S_n \in Y_{>1}^{c,b} \}$.
Take then a compact set $A \subseteq \R$ and a large enough $M>0$, so that they satisfy
\begin{eqnarray*}
\PP \left( \tau_{>1} = M ; \ S_n - b \in A \text{ for }n=0,\ldots, M \right)>0;
\end{eqnarray*}
here, $A$ is chosen to satisfy $S_0-b \in \text{int}(A)$ and $Y_{>1}^{c,b} \cap \text{int}(A) \neq \emptyset$.
Note now that it holds that $\sup \left\{ \prod_{n=1}^{M} v (y_n - b-c) / w (y_n - b-c) \, : \, y_0, \ldots, y_M \in A+b \right\} $ is finite, since $A$ is compact and $v (\cdot-c) / w (\cdot-c)$ is continuous.
With this, and using the same arguments of Case 1, we obtain that there exists a large enough $N>0$ such that
\begin{eqnarray}\label{ineq:contrad 4}
\PP\left( \tau_b=N+M \ ; \ \frac{v (S_{0}-b-c)}{w (S_{N+M}-b-c)} \prod_{n=1}^{M+N-1} \frac{w (S_n - b-c)}{v (S_n - b-c)} > 1 \right) > 0.
\end{eqnarray}
Indeed, it is sufficient to condition the probability on the left hand side of~\eqref{ineq:contrad 4} on the event $\{ \tau_{>1} = M ; \ S_0, \ldots, S_M \in A+b \}$ and use the strong Markov property.
It follows that, by absolute continuity of $\PP$ with respect to $\QQ^\bc$ over paths with finite number of steps, we have that 
\begin{eqnarray}\label{ineq:contrad 5}
\QQ^\bc \left( \tau_b=N+M \ ; \ \frac{v (S_{0}-b-c)}{w (S_{N+M}-b-c)} \prod_{n=1}^{M+N-1} \frac{w (S_n - b-c)}{v (S_n - b-c)} > 1 \right) > 0.
\end{eqnarray}
Clearly then inequality~\eqref{ineq:contrad 1} holds, since the event of the latter inequality contains the event in~\eqref{ineq:contrad 5}.
We have arrived to a contradiction with hypothesis ($\text{S1}^\bc$).
\end{proof}

\section{Proof of Proposition~\ref{prop:infinite hitting time}}\label{app:hitting}


Before showing the proof of Proposition~\ref{prop:infinite hitting time} we establish the following lemma, which is a direct corollary of~\cite[Lemma~2]{blanchet2012efficient}.
It will be used to prove part (iii) of the latter result.

\begin{lemma}\label{lemma:linear time}
Let $\QQ$ be a measure over paths of $\{S_n\}$.
Assume that we have for any large enough $b>0$ that
\begin{eqnarray}\label{ineq:linear hypothesis}
\liminf_{y \to -\infty} \left\{ \EE^\QQ \left[ S_1-S_0 | S_0=y \right] - \int_{b-y}^\infty \QQ (S_1-S_0 > u | S_0=y) \ \dd u \right\} > 0 .
\end{eqnarray}
Then $\EE^\QQ \tau_b = O(b)$ as $b \to \infty$.
\end{lemma}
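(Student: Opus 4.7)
The approach is to recognize the hypothesis as the Foster--Lyapunov drift condition required by Lemma~2 of~\cite{blanchet2012efficient}. The key algebraic step is to rewrite the bracketed expression as the expectation of a truncated increment; the rest is a matter of invoking that lemma.

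First I would use the tail-sum identity
\[ \int_{b-y}^\infty \QQ(S_1 - S_0 > u \mid S_0 = y)\,\dd u = \EE^\QQ\left[(S_1 - S_0 - (b-y))^+ \mid S_0 = y\right], \]
together with $x - (x-a)^+ = x \wedge a$, to rewrite the hypothesis as
\[ \liminf_{y \to -\infty} \EE^\QQ\left[(S_1 - S_0) \wedge (b - S_0) \mid S_0 = y\right] > 0. \]
For $y \leq b$ this expectation equals $\EE^\QQ[(S_1 \wedge b) - y \mid S_0 = y]$, so the hypothesis asserts that the truncated process $\{S_n \wedge b\}$ has uniformly positive one-step drift in the tail: there exist $\epsilon > 0$ and $y^* \in \R$ (possibly depending on $b$) with $\EE^\QQ[(S_1 \wedge b) - y \mid S_0 = y] \geq \epsilon$ for all $y \leq y^*$.

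Second, this is precisely the drift condition for the nonnegative Lyapunov function $V(y) := (b-y)^+$, since a direct computation gives $\EE^\QQ[V(S_1) - V(y) \mid S_0 = y] = -\EE^\QQ[(S_1 \wedge b) - y \mid S_0 = y] \leq -\epsilon$ whenever $y \leq y^*$. Applying Lemma~2 of~\cite{blanchet2012efficient} to $V$---which combines a supermartingale optional-stopping argument for $V(S_{n \wedge \tau_b}) + \epsilon(n \wedge \tau_b)$ with a uniform bound on the expected occupation of the compact region $(y^*, b]$---yields
\[ \EE^\QQ \tau_b \leq \frac{V(0)}{\epsilon} + O(1) = \frac{b}{\epsilon} + O(1), \]
which is the claimed $O(b)$ bound.

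The main obstacle, which is absorbed by the cited lemma, is controlling the expected time that the walk spends in the bounded region $(y^*, b]$, where the drift condition is not in force. This requires some mild regularity of the transition kernel on that compact set, which the Blanchet--Glynn kernel $\mathsf{Q}^\bc$ satisfies by continuity of $v$ and $w$; the rest of the argument is then a routine application of Doob's optional stopping theorem to the supermartingale above.
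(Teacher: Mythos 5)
Your algebraic reformulation is exactly right and matches the paper's key computation: the hypothesis is equivalent to uniform positivity, as $y \to -\infty$, of the truncated drift $\EE^\QQ[(S_1\wedge b)-y\,|\,S_0=y]$, and both you and the paper then invoke Lemma~2 of~\cite{blanchet2012efficient}. But there is a real gap in how you handle the compact region $(y^*,b]$ where the asymptotic hypothesis gives you nothing. You use $V(y)=(b-y)^+$ and assert that the cited lemma ``absorbs'' the occupation of the compact set. That is not what happens: Lemma~2 of~\cite{blanchet2012efficient}, as the paper applies it, requires the drift inequality $\EE^\QQ[h(S_1)\,|\,S_0=y]-h(y)<-\rho$ to hold for \emph{all} $y\le b$, not merely for $y\le y^*$, and $(b-y)^+$ does not satisfy it on the compact set in general.

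The device the paper uses---and that you are missing---is to take $h(y) = (C + (b-y))\,\I{y\le b}$ rather than $(b-y)^+$. The constant $C$ creates a jump of height $C$ at the boundary $y=b$, so the drift picks up an extra term $-C\,\QQ(S_1>b\,|\,S_0=y)$:
\begin{eqnarray*}
\EE^\QQ[h(S_1)-h(y)\,|\,S_0=y] = -\EE^\QQ\!\left[(S_1\wedge b)-y\,|\,S_0=y\right] - C\,\QQ(S_1>b\,|\,S_0=y).
\end{eqnarray*}
The first term is $\le -\epsilon$ for $y\le y^*$ by the hypothesis; on the remaining bounded set $(y^*,b]$ it may be nonnegative, but since $\QQ(S_1>b\,|\,S_0=y)$ is bounded away from zero there (the step has unbounded right support), choosing $C$ large enough forces the whole drift below $-\rho$ uniformly on $y\le b$. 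This is why $h(0)=C+b$ gives $\EE^\QQ\tau_b\le (C+b)/\rho = O(b)$. Your $V$ has no such term, and your appeal to ``mild regularity of the kernel'' being handled inside the cited lemma doesn't fill the hole; the fix belongs in the choice of Lyapunov function, not in the lemma.
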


\begin{proof}
The proof consists in showing that if~\eqref{ineq:linear hypothesis} holds then the function $h(y) := (C + |b-y|) \I{y \leq b}$ satisfies the hypothesis of~\cite[Lemma~2]{blanchet2012efficient}, for some $C>0$.
That is, $\EE_y^{\QQ^\bc} \left[ h(S_1) \right] - h(y) < -\rho$ for all $y \leq b$ and for some $\rho>0$.
For that, note that for $y < b$ we have
\begin{eqnarray*}
\lefteqn{\EE^{\QQ} [h(S_1) - h(y) | S_0=y ] = } \\
&&-\EE^{\QQ} \left[ S_1-S_0 | S_0=y \right] + \int_{b-y}^\infty \QQ \left( S_1-S_0 > u | S_0=y \right) \ \dd u -C \QQ (S_1>b | S_0=y)
\end{eqnarray*}
Therefore,  inequality $\limsup_{y \to -\infty} \EE^{\QQ} [h(S_1) - h(y)| S_0=y] < 0$ is equivalent to inequality~\eqref{ineq:linear hypothesis}.
Using then~\cite[Lemma~2]{blanchet2012efficient} we conclude that $\EE^{\QQ} \tau_b \leq h(0)/\rho = C/\rho + \rho^{-1} b = O(b)$.
\end{proof}


\begin{proof}[Proof of Proposition~\ref{prop:infinite hitting time}]
For part (i), we have to show that $\QQ^\bc (\tau_b < \infty | S_0=0) = 1$ holds for all $b \geq 0$.
We will actually show that this is true for all $c \in \R$.

For that, first consider any $c \in \R$ and note that from~\cite[Lemma~1]{blanchet2008efficient} we have that $\lim_{y \to -\infty} \EE^{\QQ^\bc} \left[ S_1-S_0 | S_0 = y \right] > 0$.
This result applies in our case because $X$ is strongly subexponential, since $X$ has regularly varying right tails with tail index $\alpha>1$.
Also, it can be checked that
\begin{eqnarray}\label{eq 5}
\EE^{\QQ^\bc} \left[ S_1-S_0 \, | \, S_0 = y \right] = \EE^\PP \left[ X \, | \, X+Z>c+b-y \right]
\end{eqnarray}
holds, since
\begin{eqnarray}\label{eq 6}
\QQ^\bc \left( S_1-S_0 \in \cdot | S_0 = y \right) = \PP \left( X \in  \cdot | X+Z>c+b-y \right),
\end{eqnarray}
where $X$ and $Z$ are independent and $Z$ has the residual life distribution of $X$.

We have thus that there exists $\epsilon>0$ and $y_0 \in \R$ such that for all $y \leq y_0$ we have $\EE^{\QQ^\bc} \left[ S_1-S_0 | S_0 = y \right] > \epsilon$.
It follows that $\QQ^\bc (\tau_{y_0} < \infty | S_0=y)=1$ holds for all $y \leq y_0$.
We distinguish two cases now: if $b \leq y_0$ and if $y_0 < b$.
In the former case, it is direct that $\QQ^\bc (\tau_{b} < \infty | S_0=0) \geq \QQ^\bc (\tau_{y_0} < \infty | S_0=0) = 1$ holds, since $0 \leq b \leq y_0$.
In the latter case on the other hand, that is if $y_0 < b$, we can use a standard geometric trials argument to get that, for all \guidonote{$y \leq y_0$},
$$\QQ^\bc (\tau_b < \infty | S_0=y) \geq \QQ^\bc (\text{Geom}(\gamma) < \infty) = 1.$$
Here, $\text{Geom}(\gamma)$ is an independent geometric random variable with parameter
$$\gamma := \inf_{\overline{y} \in [y_0, b]} \QQ^\bc \left( S_1-S_0 > b-y_0 | S_0 = \overline{y} \right),$$
\guidonote{where $\gamma>0$ because of~\eqref{eq 6} and using that the function $\overline y \mapsto \PP(X>c+b-y | X+Z>c+b-\overline y)$ is continuous and strictly positive.
Indeed, it is continuous because both $X$ and $Z$ have absolutely continuous distributions, the former by Assumption~(A3) and the latter by definition~\eqref{def:Z}.}
In both cases\guidonote{, $b \leq y_0$ and $y_0 < b$,} we have shown that~$\QQ^\bc (\tau_b < \infty | S_0=y) = 1$.


For (ii), we proceed by contradiction and assume that $\EE^{\QQ^\bc} \left[ \tau_b \right] < \infty$.
By~\cite[Theorem~1.1]{asmussen1996large}, for $\alpha \in (1,2)$ it holds $\EE^\PP \left[ \tau_b \left| \tau_b < \infty \right. \right] = \infty$.
But since $\QQ^\bc$ is \guidonote{a direct proposal for exact} conditional sampling when $\alpha \in (1,3/2)$, then by Corollary~\ref{prop:efficiency} part (ii) we have
$$\EE^{\QQ^\bc} \left[ \tau_b \right] = \EE^{\QQ^\bc} \left[ \sum_{n \geq 0} n \I{\tau_b=n} \right] \geq \EE^\PP \left[ \sum_{n \geq 0} n \I{\tau_b=n} \right] = \EE^\PP \left[ \tau_b \I{\tau_b < \infty} \right] = \infty ,$$
which is a contradiction.


For (iii), by Lemma~\ref{lemma:linear time} it is sufficient to show that~\eqref{ineq:linear hypothesis} holds.
For that, note that by definition of the Blanchet-Glynn kernel in~\eqref{def:Q} and the fact that
\begin{eqnarray}\label{eq 3}
v (y) = \PP(Z>-y) \guidonote{= \frac{1}{|\EE^\PP X |} \EE^\PP \left[ [X+y]^+ \right]}
\end{eqnarray}
and
\begin{eqnarray}\label{eq 4}
w (y) = \PP(X+Z>-y) \guidonote{= \EE^\PP \left[ v(y+X) \right]}
\end{eqnarray}
we have
\begin{eqnarray*}
\QQ^\bc \left( S_1-S_0 \in \cdot \ | \ S_0 = y \right) &=& \PP \left( X \in \cdot \ | \ X+Z>-y+c \right),
\end{eqnarray*}
where $Z$ is independent of $X$ and has the residual life distribution of $X$.
With this \guidonote{and identities~\eqref{eq 5} and~\eqref{eq 6}} we get that
\begin{eqnarray}
\lefteqn{\EE^{\QQ^\bc} \left[ S_1-S_0 | S_0=y \right] - \int_{b-y}^\infty \QQ^\bc (S_1-S_0 > u | S_0=y) \ \dd u} \nonumber \\
&=& \EE^\PP \left[ X | X+Z>c \guidonote{+b}-y \right] - \int_{b-y}^\infty \EE^\PP \left[ \I{X>u} | X+Z>c \guidonote{+b}-y \right] \ \dd u \nonumber \\
&=& \EE^\PP \left[ X | X+Z>c \guidonote{+b}-y \right] - \EE^\PP \left[ \left. \left[ X-b+y \right]^+ \right| X+Z>c \guidonote{+b}-y \right]. \label{eq 2}
\end{eqnarray}
For the first term in the right-hand side of~\eqref{eq 2} one obtains that
$$\liminf_{y \to  -\infty} \ \EE^\PP \left[ X | X+Z>c\guidonote{+b}-y \right] \geq (\alpha-1) |\EE^\PP X|$$
by following the same arguments of the proof of~\cite[Lemma 1]{blanchet2008efficient} and using that $t \cdot \PP(X>t) \sim |\EE^\PP X| (\alpha-1) \PP(Z>t)$ as $t \to \infty$, which is direct from Karamata's Theorem, see~\cite[Theorem 1.6.1]{bingham1989regular}.
For the second term in the right-hand side \guidonote{of~\eqref{eq 2}} one gets that if \guidonote{$b-y>z_0$} then \guidonote{by identities~\eqref{eq 3} and~\eqref{eq 4} we have}
\begin{eqnarray*}
\lefteqn{ \EE^\PP \left. \left[ \left[ X-b+y \right]^+ \right| X+Z>c \guidonote{+b}-y \right] = \frac{\EE^\PP \left[ \left[ X-b+y \right]^+ \guidonote{\I{X+Z>c+b-y}} \right] }{ \PP(X+Z>c+b-y) } } \\
&& \guidonote{= \frac{v (y-b-c) }{ w (y-b-c) } \frac{\int_{b-y}^\infty \PP(X>u, \ X+Z>c+b-y)\ \dd u}{\PP(Z>c+b-y)} }\\
&& \guidonote{ \leq \frac{v (y-b-c) }{ w (y-b-c) } \frac{\int_{b-y}^\infty \PP(X>u)\ \dd u}{\PP(Z>c+b-y)}} = \frac{v (y-b-c) }{ w (y-b-c) } \guidonote{\frac{\PP(Z>b-y)}{\PP(Z>c+b-y)}} |\EE^\PP X|.
\end{eqnarray*}
It follows that
$$\limsup_{y \to -\infty} \EE^\PP \left[ \left[ X-b+y \right]^+ | X+Z>\guidonote{b}-y+c \right] \guidonote{\leq} |\EE^\PP X|,$$
since $v (y-b-c) / w (y-b-c) \to 1$ as $y \to -\infty$ by~\cite[Proposition 3]{blanchet2008efficient}\guidonote{, and since $Z$ is long tailed because it is regularly varying with tail index $\alpha-1>0$}.

We have thus obtained that
$$\liminf_{y \to -\infty} \left\{ \EE^{\QQ^\bc} \left[ S_1-S_0 | S_0=y \right] - \int_{b-y}^\infty \QQ^\bc (S_1-S_0 > u | S_0=y) \dd u \right\} \geq (\alpha-2) \cdot |\EE^\PP X|,$$
so applying Lemma~\ref{lemma:linear time} we conclude that if $\alpha>2$ then $\EE^{\QQ^\bc} \tau_b = O(b)$ as $b \to \infty$.
\end{proof}

\end{appendices}

\bibliographystyle{plain}
\bibliography{bibliography}

\end{document}